\theoremstyle{plain}
\newtheorem{thm}{Theorem}[section]
\newtheorem{coro}[thm]{Corollary}
\newtheorem{defn}[thm]{Definition}
\newtheorem{prop}[thm]{Proposition}
\theoremstyle{definition}
\newtheorem{ex}[thm]{Example}
\newtheorem{rem}[thm]{Remark}
\newcommand{\N}{\mathbb{N}}
\newcommand{\R}{\mathbb{R}}
\newcommand{\ind}{\perp\!\!\!\perp}
\def\e{\mathrm{e}}
\def\E{\mathbb{E}}
\def\d{\, \mathrm{d}}
\def\1{\mathds{1}}
\def\R{\mathbb{R}}
\def\P{\mathbb{P}}
\def\Q{\mathbb{Q}}
\def\E{\mathbb{E}}
\def\N{\mathbb{N}}
\def\X{\mathcal{X}}
\def\Y{\mathcal{Y}}
\def\K{\textup{\textbf{K}}}
\def\KL{\textup{ KL}}
\def\|{\, | \,}
\def\ee{\varepsilon}
\def\rd{\,\mathrm{d}}
\def\var{\mathrm{Var}}
\def\cov{\mathrm{Cov}}
\DeclareMathOperator*{\argmin}{arg\,min}
\DeclareMathOperator*{\argmax}{arg\,max}
\def\e{\mathrm{e}}
\def\F{\mathcal{F}}
\def\nn{\nonumber}
\def\eql{\overset{\mathcal L}{=}}
\numberwithin{equation}{section}
\begin{document}
\title{Information-Based Martingale Optimal Transport}
\author{Georges Kassis and Andrea Macrina\footnote{Corresponding author: a.macrina@ucl.ac.uk}  \\ \\
Department of Mathematics, University College London \\ London WC1E 6BT, United Kingdom}
\date{18 January 2026}
\maketitle
\vspace{-.5cm}
\begin{abstract}
Randomised arcade processes are a class of continuous stochastic processes that interpolate in a strong sense, i.e., omega by omega, between any given ordered set of random variables, at fixed pre-specified times. Utilising these processes as generators of partial information, a class of continuous-time martingale---the filtered arcade martingales (FAMs)---are constructed. FAMs interpolate through a sequence of target random variables, which form a discrete-time martingale. The research presented in this paper relaxes the FAM setting to the interpolation between probability measures instead and treats the problem of selecting the worst martingale coupling for given, convexly ordered, probability measures contingent on the paths of FAMs that are constructed using the martingale coupling. This optimisation problem, that we term the information-based martingale optimal transport problem (IB-MOT), can be viewed from different perspectives. It can be understood as a model-free construction of FAMs, in the case where the coupling is not determined {\it a priori}. It can also be considered from the vantage point of optimal transport (OT), where the problem is concerned with introducing a noise factor in martingale optimal transport, similarly to how the entropic regularisation of optimal transport introduces noise in OT. The IB-MOT problem is static in its nature, since its aim is to find a coupling. However, a corresponding dynamical solution can be found by considering the FAM constructed with the identified optimal coupling. The existence and uniqueness of its solution are shown and an algorithm for empirical measures is proposed.
\vspace{.25cm}
\end{abstract}
{\bf Keywords}: Stochastic interpolation, stochastic bridges, randomised arcade processes, filtered martingale interpolation, martingale fitting, martingale optimal transport, nonlinear stochastic filtering, information-based approach, Markov processes, Bayesian updating.
\\
\\
{\bf AMS2020}: 49Q22, 60G35, 60G44, 60J60, 62M20, 94A15.

\section{Introduction}
Filtered arcade martingales (FAMs), a new class of stochastic processes developed in \cite{KassisMacrina} (see also \cite{Kassis} for more details), are continuous-time martingales that interpolate almost surely between finitely many convexly ordered random variables---which form a discrete-time martingale referred to as the target martingale vector---by filtering information generated by a randomised arcade process (RAP). In this paper, we relax this setting into a weak interpolation problem. Instead of the vector $X$, we consider a vector of convexly ordered probability measures and aim to find a FAM that interpolates in law between these measures. Since a FAM interpolates almost surely, it also interpolates in law. But depending on the target measures, there may be several FAMs with different laws, while still sharing the same driver and interpolating coefficients in their RAPs, that solve this weak interpolation problem. Note that having this kind of choice in the almost sure interpolation setting is impossible, since once a target martingale vector $X$ is fixed and an $X$-RAP is chosen, an almost surely unique FAM fills the gaps between the components of the vector $X$.

This paper is about an optimisation method for selecting a "meaningful" FAM in the weak interpolation setting, for a fixed driver and interpolating coefficients. Selecting one of the FAMs is equivalent to selecting a martingale coupling for the target measures, as the coupling is the sole parameter distinguishing between the candidate FAMs. So, which FAM/coupling should be selected? Recall that a FAM estimates the value of the final target random variable by filtering the information that is unveiled over time by its RAP. In the absence of a given martingale coupling, one might be interested in the FAM that produces the "worst case scenario", i.e., the poorest possible estimation of the target measure over time, in some norm. In the spirit of this remark, we introduce in Section \ref{IBMOTformulation} the {\it information-based martingale optimal transport} (IB-MOT) problem, which selects the martingale coupling that maximises the expected cumulative weighted error between a target measure and its associated FAM. While this problem is here framed from a filtering perspective, it can be interpreted through various lenses. For instance, as the name suggests, the idea of selecting a coupling based on paths that connect its marginal distributions within the space of probability measures is reminiscent of optimal transport (OT). As we will explore in the next section, IB-MOT could be viewed as introducing noise in martingale optimal transport (MOT, \cite{Beiglbock}, \cite{Beiglbock2}), akin to how the entropic regularisation of OT introduces noise in OT. However, the nature of this noise injection differs here as it pertains to selecting a volatility rather than a drift. IB-MOT is fundamentally a static problem, that is, aiming to select an optimal coupling. However, since each coupling corresponds to a candidate FAM in law, IB-MOT also has a dynamical aspect. The solution to this dynamical formulation can be derived by inserting the static solution into the general integral representation of a FAM.

Before diving into IB-MOT, we start by recalling the central definitions and results from the arcade processes theory introduced in \cite{KassisMacrina}, and \cite{Kassis} for more details. In what follows, let $n\in \N_0$ and consider the collection of fixed dates $\{T_i \in \R \| i=0, 1, \ldots, n\}$ such that $0\leqslant T_0 <T_{1}<T_{2}<\ldots < T_n < \infty$, and introduce the ordered sets 
\begin{eqnarray*}
\{T_0,T_n\}_*&=&\{T_0, T_1, \ldots,T_n\},\\
(T_0,T_n)_*&=&\bigcup\limits_{i=0}^{n-1}(T_i,T_{i+1}).
\end{eqnarray*}
Let $(\Omega, \F, \mathbb{P})$ be a probability space, $\left(D_{t}\right)_{t \in [T_0,T_n]}$ a sample-continuous stochastic process such that $\P[D_t=0]<1$ whenever $t\in (T_0,T_n)_*$, and $X$ an $\R^{n+1}$-valued random vector that is independent of $(D_t)$.
\begin{defn} \label{interpolating}
The functions $f_0,f_1,\ldots, f_n$ are interpolating coefficients on $\{T_0,T_n\}_*$ if $f_0, $ $f_1,\ldots, f_n \in C^0\left([T_0,T_n],\R\right), f_i(T_i)=1,$ and $f_i(T_j)=0 \text{ for }i,j=0,\ldots,n,\, i \neq j$.
\end{defn}
We can now give the definition of an arcade process.
\begin{defn} \label{AP}
An arcade process (AP), denoted $(A_{t}^{(n)})_{t \in [T_0,T_n]}$, on the partition $\{T_0,T_n\}_*$ is a stochastic process of the form
\begin{equation}
A_{t}^{(n)}:=D_{t}-\sum_{i=0}^n f_i(t)D_{T_i},
\end{equation}
where $f_0,\ldots, f_n$ are  interpolating coefficients on $\{T_0,T_n\}_*$. The process $(D_t)_{t \in [T_0,T_n]}$ is the driver of the AP. We denote by $(\F^A_t)_{t \in [T_0,T_n]}$ the filtration generated by $(A_{t}^{(n)})_{t \in [T_0,T_n]}$.
\end{defn}
We observe that $A_{T_0}^{(n)}=A_{T_1}^{(n)}=\ldots=A_{T_n}^{(n)}=0$ by construction, for all $\omega \in \Omega$. That is, $(A_{t}^{(n)})_{t \in [T_0,T_n]}$ strongly interpolates between zeros at times $t \in \{T_0,T_n\}_*$. This idea is extended to the interpolation between the components of the random vector $X$ instead of interpolating between zeros.
\begin{defn} \label{RAP}
An $X$-randomised arcade process (X-RAP) $(I_t^{(n)})_{t \in [T_0,T_n]}$ on the partition $\{T_0,T_n\}_*$ is a stochastic process given by
\begin{equation}
    I_t^{(n)}:=S_t^{(n)}+  A_t^{(n)} = D_{t}-\sum_{i=0}^n \left(f_i(t)D_{T_i}-g_i(t)X_{i}\right),
\end{equation}
where $f_0,\ldots, f_n$ and $g_0,\ldots, g_n$ are interpolating coefficients on $\{T_0,T_n\}_*$. We refer to 
\begin{equation}
    S_t^{(n)}=\sum\limits_{i=0}^n g_i(t)X_{i}
\end{equation}
as the signal function of $I_t^{(n)}$ and to 
\begin{equation}
    A_t^{(n)}=D_{t}-\sum\limits_{i=0}^n f_i(t)D_{T_i}
\end{equation}
as the noise process of $I_t^{(n)}$. We denote by $(\F^I_t)_{t \in [T_0,T_n]}$ the filtration generated by $(I_t^{(n)})$.
\end{defn}
We emphasise that $I_{T_0}^{(n)}= X_{0}, \ldots, I_{T_n}^{(n)}= X_{n}$, so $(I_t^{(n)})$ is a strong stochastic interpolator between the random variables $X_{0},\ldots, X_{n}$. The signal function $(S_t^{(n)})$ is independent of the noise process $(A_t^{(n)})$ since $X$ is assumed independent of $(D_t)_{t\in [T_0,T_n]}$. We introduce the following notation:
\begin{enumerate}
    \item $\mathcal P(\R^n)$ is the set of Borel probability measures on $\R^n$, for $n\in \N_0$.
    \item $\mathcal P_1(\R^n)$ is the set of Borel probability measures on $\R^n$, for $n\in \N_0$, with finite first moment.
    \item $\Pi(\mu_0,\mu_1,\ldots, \mu_n):=  \{ \pi \in \mathcal P(\R^{n+1}) \| \mu_{i-1} \text{ is the ith marginal measure of } \pi, \text{ for all }  \\ i=1,\ldots, n+1 \}$ is the set of couplings of $\{\mu_0,\mu_1,\ldots, \mu_n\} \subseteq \mathcal P (\R)$.
    \item $\mathcal M (\mu_0,\mu_1,\ldots, \mu_n):=\{\pi \in \Pi(\mu_0,\mu_1,\ldots, \mu_n) \cap \mathcal P_1(\R^{n+1}) \| (X_0,X_1,\ldots, X_n) \sim \pi \implies \\ \E[X_n \| X_0,\ldots, X_m] = X_m \text{ for all } m=0,\ldots, n\}$ is the set of martingale couplings of $\{\mu_0,\mu_1,\ldots, \mu_n\} \subseteq \mathcal P (\R)$.
\end{enumerate}
We recall that $\mathcal M (\mu_0,\mu_1,\ldots, \mu_n) \neq \emptyset$ if and only if $\int_\R f(x) \d \mu_0(x) \leqslant \int_\R f(x) \d \mu_1(x)\leqslant \ldots \leqslant \int_\R f(x) \d \mu_n(x)$ for any convex function $f$ on $\R$, i.e, the measures $\mu_0,\mu_1,\ldots, \mu_n$ are convexly ordered, see \cite{Strassen} and \cite{Kellerer}. In this case, we write $\mu_0\leqslant_{\mathrm{cx}}\mu_1\leqslant_{\mathrm{cx}}\ldots\leqslant_{\mathrm{cx}} \mu_n$. We can now recall the definition of filtered arcade martingales (FAMs). We do this for $n=1$ because this is the case that is of interest in this paper. For $n>1$, we refer the reader to \cite{KassisMacrina}.

Let $X=(X_0,X_1)$ be a random vector distributed according to a martingale coupling $\pi^X \in \mathcal M (\mu_0,\mu_1)$, where $(\mu_0,\mu_1) \in \mathcal P_1 (\R) \times \mathcal P_1 (\R)$ and $\mu_0 \leqslant_{\mathrm{cx}}\mu_1$. This means that $X_0 \sim \mu_0$, $X_1 \sim \mu_1$, and $\E[X_1 \| X_0 ] = X_0$.
\begin{defn} \label{1FAM}
Given an $X$-RAP $(I_t^{(1)})$, a one-arc FAM for $X\sim \pi^X$ on $[T_0,T_1]$ is a stochastic process defined by $M_t=\E[X_1 \| \F_t^I ]$.
\end{defn}
The process $(M_t)$ is an $(\F_t^I)$-martingale that interpolates between $X_0$ and $X_1$, almost surely. The choice of the coupling $\pi^X$, alongside the interpolating coefficients and the driver of $(I_t^{(1)})$, determine the dynamics of the paths of a FAM, since they specify how $X_0$, $X_1$, and the information process are related to each other. When a FAM can be written as $M_t = \E [ X_1 \| X_0, I_t^{(1)} ] $, interesting properties arise. 

\begin{prop} \label{Ito}
Let $M_t = \E [ X_1 \| X_0, I_t^{(1)} ]$ be a one-arc FAM. If $(I_t^{(1)})$ is a semimartingale with Gauss-Markov driver $(D_t)$, such that $(t,x) \rightarrow V(t,x,X_0)/K(t,x,X_0)$ is $C^2(((T_0,T_1) \setminus N) \times Im(I^{(1)}))$ where $N \subset (T_0,T_1)$ contains finitely many elements, then there exists a standard Brownian motion on $[T_0,T_1]$ $(W_t)_{t\in[T_0,T_1]}$, called the innovations process, such that
\begin{equation} \label{GMFAM}
    M_t = X_0 + \int_{T_0}^t \frac{\var [ X_1 \| X_0, I_s^{(1)} ] \sqrt{H_1'(s)H_2(s)  - H_1(s) H_2'(s) }}{H_1(T_1)H_2(s)-H_1(s)H_2(T_1)} \d W_s
\end{equation}
for $t \in (T_0,T_1)$, where $H_1$ and $H_2$ are differentiable almost everywhere functions given by the covariance function of the driver: $K_D(x,y)=H_1(\min(x,y))H_2(\max(x,y))$.
\end{prop}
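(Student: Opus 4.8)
The plan is to exhibit $M_t$ as a smooth function of $(t,I_t^{(1)},X_0)$ through a Bayes-type filtering formula, apply It\^o's formula, and use that $M$ is an $(\F_t^I)$-martingale to cancel the drift. Conditioning on $X_0$ turns this into a ``signal plus independent Gaussian noise'' filter: $(A_t^{(1)})$ is independent of $X=(X_0,X_1)$ and, the driver being Gauss--Markov, $A_t^{(1)}$ is centred Gaussian with variance $\gamma(t):=\var[A_t^{(1)}]$. Writing $q_v$ for the centred Gaussian density of variance $v$ and $\mu_1^{x_0}$ for a regular conditional law of $X_1$ given $X_0=x_0$, Bayes' theorem then gives
\[
M_t=\frac{V(t,I_t^{(1)},X_0)}{K(t,I_t^{(1)},X_0)},\qquad K(t,\xi,x_0)=\int_\R q_{\gamma(t)}\big(\xi-g_0(t)x_0-g_1(t)x_1\big)\,\mu_1^{x_0}(\d x_1),
\]
with $V$ the same integral carrying the extra factor $x_1$; this uses the hypothesis that the FAM satisfies $M_t=\E[X_1\mid X_0,I_t^{(1)}]$ (and $M_t=\E[X_1\mid\F_t^I]$ by definition), so that conditioning on $\F_t^I$ and on $(X_0,I_t^{(1)})$ coincide. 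A Gaussian integration by parts in $\xi$ then gives $\partial_\xi(V/K)(t,\xi,x_0)=\gamma(t)^{-1}g_1(t)\,\var[X_1\mid X_0=x_0,\,I_t^{(1)}=\xi]$, the source of the conditional-variance factor in \eqref{GMFAM}.

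Next I would compute the continuous martingale part of the observation. Setting $\phi:=H_1/H_2$, which is nondecreasing because $K_D$ is a covariance kernel, $N_t:=D_t/H_2(t)$ is a Gaussian martingale with $\d\langle N\rangle_t=\d\phi(t)$, so that $\d\langle D\rangle_t=H_2(t)^2\,\d\phi(t)=\big(H_1'(t)H_2(t)-H_1(t)H_2'(t)\big)\,\d t$. Since $S_t^{(1)}$ and the pinning terms $f_i(t)D_{T_i}$ are absolutely continuous in $t$, the continuous martingale part and the quadratic variation of $I^{(1)}$ coincide with those of $D$, whence $\d\langle I^{(1)}\rangle_t=\big(H_1'H_2-H_1H_2'\big)(t)\,\d t$ on $(T_0,T_1)$. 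Applying It\^o's formula to $M_t=(V/K)(t,I_t^{(1)},X_0)$ on $((T_0,T_1)\setminus N)$ --- valid by the assumed $C^2$ regularity and the semimartingale hypothesis on $I^{(1)}$ --- gives $\d M_t=\partial_\xi(V/K)(t,I_t^{(1)},X_0)\,\d I_t^{(1),c}+(\text{finite variation})$, and the finite-variation part vanishes because $M=\E[X_1\mid\F_\cdot^I]$ is a martingale; therefore $\d\langle M\rangle_t=\big(\gamma(t)^{-1}g_1(t)\,\var[X_1\mid X_0,I_t^{(1)}]\big)^{2}\big(H_1'H_2-H_1H_2'\big)(t)\,\d t$.

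It then suffices to put $W_t:=\int_{T_0}^{t}\sigma_s^{-1}\,\d M_s$, where $\sigma_s$ denotes the integrand appearing in \eqref{GMFAM}: the computation above gives $\langle W\rangle_t=t-T_0$, so L\'evy's characterisation makes $W$ a Brownian motion on $[T_0,T_1]$ (the innovations process), and $M_t=M_{T_0}+\int_{T_0}^{t}\sigma_s\,\d W_s$ with $M_{T_0}=\E[X_1\mid X_0]=X_0$ by the martingale-coupling property. The last point is to reconcile the prefactor $\gamma(t)^{-1}g_1(t)$ with $\big(H_1(T_1)H_2(t)-H_1(t)H_2(T_1)\big)^{-1}$; this is where the specific Gauss--Markov RAP construction of \cite{Kassis1} is used, namely that $(A_t^{(1)})$ is the Gauss--Markov bridge pinned to $0$ at $T_0$ and $T_1$ and that $g_1=f_1$ is the corresponding interpolating coefficient. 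From the bridge covariance one obtains $\gamma(t)=H_2(t)^{2}(\phi(t)-\phi(T_0))(\phi(T_1)-\phi(t))/(\phi(T_1)-\phi(T_0))$ and $g_1(t)=H_2(t)(\phi(t)-\phi(T_0))/\big(H_2(T_1)(\phi(T_1)-\phi(T_0))\big)$, whose ratio collapses to $\big(H_2(T_1)H_2(t)(\phi(T_1)-\phi(t))\big)^{-1}=\big(H_1(T_1)H_2(t)-H_1(t)H_2(T_1)\big)^{-1}$, giving exactly \eqref{GMFAM}.

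I expect the main obstacle to be the martingale/semimartingale bookkeeping in the enlarged filtration $\F^I$: rigorously justifying that $M$ carries no finite-variation part and that $\langle I^{(1)}\rangle=\langle D\rangle$ despite the anticipative term $f_1(t)D_{T_1}$ --- which is exactly why the statement is confined to the open interval $(T_0,T_1)$ and presupposes that $I^{(1)}$ is a semimartingale. A secondary, purely computational point is the identification of $\gamma$ and $g_1$ with the Gauss--Markov bridge quantities imported from \cite{Kassis1}.
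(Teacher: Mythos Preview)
The paper does not prove this proposition; it is imported from \cite{Kassis1} (the explicit form of $W$ alluded to just after the statement, and again in the Remark following Corollary~2.3, is Proposition~4.13 there). There is therefore no in-paper argument to compare against directly.

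Your outline is the standard nonlinear-filtering derivation and is essentially correct. The Bayes representation $M_t=V/K$, the Gaussian score identity $\partial_\xi(V/K)=\gamma(t)^{-1}g_1(t)\,\var[X_1\mid X_0,I_t^{(1)}]$, and the quadratic-variation computation $\d\langle D\rangle_t=(H_1'H_2-H_1H_2')(t)\,\d t$ via $N_t=D_t/H_2(t)$ all check. The one methodological difference with the approach of \cite{Kassis1} is the order of construction: there the innovations process $W$ is defined directly from the observation $I^{(1)}$ (an explicit $\d W_t$ written in terms of $\d I_t^{(1)}$, $M_t$, $I_t^{(1)}$ and deterministic functions of $t$, as quoted in the Remark after Corollary~2.3), and $M$ is then represented as a stochastic integral against it; you instead compute $\langle M\rangle$ first and set $W:=\int\sigma^{-1}\d M$ via L\'evy's characterisation. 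Your route is tidier for the statement as written but tacitly requires $\sigma_s>0$ a.e., i.e.\ $\var[X_1\mid X_0,I_s^{(1)}]>0$, whereas the direct innovations construction does not need this nondegeneracy and moreover yields a $W$ that is manifestly $(\F_t^I)$-adapted. The identification $g_1(t)/\gamma(t)=\bigl(H_1(T_1)H_2(t)-H_1(t)H_2(T_1)\bigr)^{-1}$ is, as you correctly flag, specific to the standard Gauss--Markov RAP of \cite{Kassis1} and must be imported from there.
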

The notation $H'$ is short-hand for differentiation when the derivative exists almost everywhere. The innovations process $(W_t)$ can be obtained explicitly, see \cite{KassisMacrina}. In the next section, we introduce the information-based martingale optimal transport problem. 

\section{Simple analogy with several optimal transports} \label{analogy}
Let $(B_t)_{t\geqslant0}$ be a standard Brownian motion. For now, to compare with other interpolating methods, we start with the standard one-arc $X$-RAP $(I_t^{(1)})_{t\in[T_0,T_1]}$ given by 
\begin{equation} \label{RBB}
    I_{t}^{(1)}=  B_t - \frac{T_1-t}{T_1-T_0} (B_{T_0}-X_0) - \frac{t-T_0}{T_1-T_0} (B_{T_1}-X_1).
\end{equation}
We remark that the $n$-arc case, with a general standard RAP, can be treated similarly. We consider the FAM
\begin{equation}
    M_t= \E[ X_1 \mid \F_t^I]= \E[ X_1 \mid X_0, I_t^{(1)}] = X_0+ \int_{T_0}^t \frac{\var [ X_1 \mid X_0, I_s^{(1)} ]}{T_1-s} \rd W_s,
\end{equation}
where $(W_t)$ is the innovations process of $(M_t)$, a standard Brownian motion $[T_0,T_1]$ adapted to the filtration $(\F_t^I)$ that is generated by $(I_{t}^{(1)})$, which is $\{T_0,T_1\}_*$ -conditionally-Markov. The FAM $(M_t)$ depends on the coupling $\pi^X$ of $(X_0,X_1)$. In some real-world situations, the coupling is not observed directly, only its marginals are. Choosing a coupling is then part of the modelling task determined by the problem at hand. Another approach is to go ``model-free'' by utilising a least action principle, such as martingale optimal transport (MOT) for instance.

Optimal transport (OT) dates back to Gaspard Monge in 1781 \cite{Monge}, with significant advancements by Leonid Kantorovich in 1942 \cite{Kantorovich} and Yann Brenier in 1987 \cite{Brenier}. It provides a way to compare two measures, $\mu$ and $\nu$, defined on the Borel sets of topological Hausdorff spaces $\X$ and $\Y$, respectively. To make this statement precise, one needs a cost function $c: \X \times \Y \rightarrow [0,\infty]$ that measures the cost of transporting a unit mass from $x \in \X$ to $y\in\Y$. The optimal transport problem is concerned with how to transport $\mu$ to $\nu$ whilst minimising the cost of transportation. That is, given $\mu \in \mathcal{P}(\X)$ and $\nu \in \mathcal{P}(\Y)$,
\begin{equation}
    \inf_{\pi \in \Pi(\mu,\nu)} \K(\pi) := \inf_{\pi \in \Pi(\mu,\nu)} \int_{\X \times \Y} c(x,y) \rd \pi(x,y).
\end{equation}
This problem possesses many interesting properties (see for instance \cite{Villani1}, \cite{Villani2}, \cite{Bogachev}, or Section 2.3 in \cite{Kassis} for a short overview). For example, if $\X=\Y$ is a Polish space, and $(\X,c)$ is a metric space, then
\begin{equation}
W_{p}(\mu, \nu):=\left(\inf _{\pi \in \Pi(\mu, \nu)} \int_{\X \times \X} c(x, y)^{p} \mathrm{d} \pi(x, y)\right)^{1 / p}
\end{equation}
defines a metric for any $p\geqslant 1$, the Wasserstein $p$-metric, on the space $\mathcal P_p(\X)$ of probability measures on $\X$ with finite $p$th moment. Furthermore, if $\X$ is Euclidean, and $c(x,y)=\norm{x-y}$, there is a one-to-one correspondence between the minimisers $\pi^*$ of $\inf_{\pi \in \Pi(\mu,\nu)} \int_{\X \times \Y} c(x,y)^p \rd \pi(x,y)$, and the geodesics in $(\mathcal P_p(\X),W_{p})$: If $(X_0,X_1) \sim \pi^*$, the law of the process 
\begin{equation}
    \left(\frac{T_1-t}{T_1-T_0} X_0 + \frac{t-T_0}{T_1-T_0} X_1\right)_{t\in[T_0,T_1]}
\end{equation}
is the shortest paths from $\mu$ to $\nu$ in $(\mathcal P_p(\X),W_{p})$. This links optimal transport to interpolation on the space of random variables. The interpolating process is simple, since it is deterministic conditional on $(X_0,X_1)$. This is because OT does not incorporate noise in its formulation. If one wishes to introduce noise in the system, a solution could be to add an independent Brownian bridge on $[T_0,T_1]$ to $(T_1-t)/(T_1-T_0) X_0 + (t-T_0)/(T_1-T_0) X_1$. This is trivial on some level, since the noise term, here the Brownian bridge, is not related to the initial optimisation problem. We call such interpolator artificially noisy. If one wishes to interpolate while a noise term is involved within the optimal transport framework, the entropic regularisation of optimal transport \cite{Peyre1}, which we recall briefly, provides a way forward. Given $\mu \in \mathcal{P}(\X)$, $\nu \in \mathcal{P}(\Y)$, and $\ee >0$, one considers 
\begin{equation}
    \inf_{\pi_\ee \in \Pi(\mu,\nu)} \K_\ee(\pi_\ee) := \inf_{\pi_\ee \in \Pi(\mu,\nu)} \int_{\X \times \Y} c(x,y) \rd \pi_\ee(x,y) + \ee \KL(\pi_\ee \mid \mu \otimes \nu )
\end{equation}
where$\KL$ is the Kullback-Leibler divergence
\begin{equation}
    \KL(\pi_\ee \mid \mu \otimes \nu ) = \left\{
    \begin{array}{ll}
        \displaystyle{\int_{\X \times \Y} \ln \left(\frac{\rd\pi_\ee}{\rd \mu \rd \nu} (x,y)\right)} \rd\pi_\ee (x,y), & \mbox{if } \pi_\ee<< \mu \otimes \nu, \\
        \\
        \infty & \mbox{otherwise.} \\
    \end{array}
\right. 
\end{equation}
In the case where $\X=\Y$ is Euclidean, $c(x,y)=\norm{x-y}^2$, and $\ee = 2 (T_1-T_0)$, this problem is equivalent to Schrödinger's problem (\cite{Schrodinger}, \cite{Leonard1}) with volatility parameter equal to $1$, see \cite{Kassis} for an in-depth analysis. If $\pi_S^*$ is the solution to Schrödinger's problem, any stochastic process $(Y_t)_{t \in [T_0,T_1]}$ satisfying
\begin{equation}
    \P \, [\,  {\bf Y} \in \cdot \, ] = \int_{\X}\int_{\X} \P \, [ \, {\bf B} \in \cdot \mid B_{T_0} = x, \, B_{T_1} = y ] \rd\pi_S^* (x,y),
\end{equation}
where $(B_t)_{t \in \R}$ is a standard Brownian motion and
\begin{equation}
\begin{aligned}
    {\bf Y}: \quad &\Omega \rightarrow (C^0 ([T_0,T_1], \X),||.||_\infty) \\
    &\omega \mapsto \{Y_t(\omega) \mid t\in [T_0,T_1]\}
\end{aligned}
\end{equation}
\begin{equation}
\begin{aligned}
    \, \, {\bf B}: \quad &\Omega \rightarrow (C^0 ([T_0,T_1], \X),||.||_\infty), \\
    &\omega \mapsto \{B_t(\omega) \mid t\in [T_0,T_1]\}
\end{aligned}
\end{equation}
is called Schrödinger's bridge. Hence, $(I_{t}^{(1)})$ is an anticipative representation of Schrödinger's bridge with volatility parameter equal to $1$ (in dimension one, since we did not introduce a definition of RAPs in higher dimensions) as long as $(X_0,X_1) \sim \pi_S^*$. We call this interpolator genuinely noisy, as opposed to artificially noisy, because, in this case, the noise is taken into account in the optimisation problem. We summarise the three interpolators built using optimal transport in dimension one:
\begin{enumerate}
    \item The default OT interpolator, or the shortest path interpolator on $[T_0,T_1]$: $(T_1-t)/(T_1-T_0) X_0 + (t-T_0)/(T_1-T_0) X_1$ where $(X_0,X_1) \sim \pi^*$.
    \item The artificially noisy OT interpolator on $[T_0,T_1]$: $(I_{t}^{(1)})$, where $(X_0,X_1) \sim \pi^*$.
    \item The genuinely noisy OT interpolator, or Schrödinger's bridge on $[T_0,T_1]$: $(I_{t}^{(1)})$, where $(X_0,X_1) \sim \pi_S^*$.
\end{enumerate}
To redesign these interpolators to suit a martingale setup, let us fix $\X=\Y=\R$, which is the FAM setting. We can adapt optimal transport to yield a martingale coupling instead as discussed in Section 2.4: given $\mu,\nu \in \mathcal{P}_1(\R)$ in convex order,
\begin{equation}
    \inf_{\pi \in \mathcal M (\mu,\nu)} \K(\pi) = \inf_{\pi \in \mathcal M (\mu,\nu)} \int_{\R^2} c(x,y) \rd \pi(x,y),
\end{equation}
We denote the minimisers of this problem by $\pi_m^*$. There are many differences between optimal transport and its martingale counterpart. For instance, the most popular cost function $c(x,y)=(x-y)^2$ for optimal transport cannot be used in the martingale context because, in this case, the objective function $\K(\pi)$ does not depend on $\pi$:
\begin{align}
    \int_{\R^2} (x-y)^2 \rd \pi(x,y) &=  \int_{\R^2} x^2 + y^2 - 2xy \rd \pi(x,y) \nn \\
    &= \int_{\R^2} x^2 \rd \mu(x) + \int_{\R^2} y^2  \rd \nu(y) - 2 \int_{\R^2} y^2  \rd \nu(y) \nn \\
    &= \int_{\R^2} x^2 \rd \mu(x) - \int_{\R^2} y^2  \rd \nu(y).
\end{align}
Another difference is that we lose the geodesic interpretation, since there is no counterpart to the Wasserstein distance in the martingale context. So, what would be the martingale counterparts to the shortest path, artificially noisy, and genuinely noisy OT interpolators? This is not trivial, since the interpolators must be martingales, and $((T_1-t)/(T_1-T_0) X_0 +(t-T_0)/(T_1-T_0) X_1)_{t\in [T_0,T_1]}$ is not a martingale regardless of the coupling of $(X_0,X_1)$.

Since there is no longer something like a shortest path interpolator, we expect its counterpart in the martingale setup to be discontinuous. We propose the process that is equal to $X_0$ in $T_0$ and equal to $X_1$ for $t \in (T_0,T_1]$. For the artificially noisy MOT interpolator, the FAM $(M_t)$ with $(X_0,X_1) \sim \pi_m^*$ is a candidate. Indeed, it is a martingale, and its noise process is not taken into account in the selection of the optimal coupling $\pi_m^*$. Furthermore, if we remove the artificial noise, i.e., we set $A_t^{(1)}=0$ in the expression of $(M_t)$, and denote by $(\mathcal G_t)_{t\in[T_0,T_1]}$ the filtration generated by $((T_1-t)/(T_1-T_0) X_0 +(t-T_0)/(T_1-T_0) X_1)_{t\in[T_0,T_1]}$, we obtain 
\begin{equation}
    M_t= \E [ X_1 \mid \mathcal G_t] = \E \left[ X_1\, \bigg|\, X_0, \frac{T_1-t}{T_1-T_0} X_0 + \frac{t-T_0}{T_1-T_0} X_1 \right] = \left\{
    \begin{array}{ll}
          X_0   & \mbox{if } t=T_0,  \\
         X_1  & \mbox{if } t \in (T_0,T_1].
    \end{array}
\right.
\end{equation}
Thus, we retrieve the discontinuous MOT interpolator, similarly to how one retrieves the shortest path OT interpolator when setting the artificial noise to $0$ in the artificially noisy OT interpolator. For a genuinely noisy MOT interpolator, i.e., a sort of martingale counterpart to Schrödinger's bridge, we propose a new problem that we call the \emph{information-based martingale optimal transport problem}.

\section{Formulation of the problem and properties} \label{IBMOTformulation}
\begin{defn} \label{IBMOTdef}
Let $\mu$ and $\nu$ be two $L^2(\R)$-probability measures, in convex order. Given a martingale coupling $\pi \in \mathcal M (\mu,\nu)$, let $M_t(X_0,I_t^{(1)})=\E_\pi[X_1 \mid X_0, I_t^{(1)} ]$, $t\in [T_0,T_1]$ be the one-arc FAM for $(X_0,X_1)\sim \pi$ constructed using the randomised Brownian bridge $(I_t^{(1)})_{t\in [T_0,T_1]}$, see Eq. (\ref{RBB}). The information-based martingale optimal transport (IB-MOT) problem associated with $(I_t^{(1)})$ for the marginals $\mu$ and $\nu$ is 
\begin{equation}
    \sup_{\pi \in \mathcal M (\mu,\nu)} \E \left[ \int_{T_0}^{T_1} \frac{[X_1-M_t(X_0,I_t^{(1)})]^2}{T_1-t} \rd t \right].
\end{equation}
Similarly, for a general standard RAP $(I_t^{(1)})$ that satisfies the conditions in Proposition \ref{Ito}, the IB-MOT problem associated with $(I_t^{(1)})$ is
\begin{equation} \label{IB-MOT}
\begin{aligned}
& \sup_{\pi \in \mathcal M (\mu,\nu)} \K_I(\pi) := \\
& \sup_{\pi \in \mathcal M (\mu,\nu)} \E \left[ \int_{T_0}^{T_1} \frac{[X_1-M_t(X_0,I_t^{(1)})]^2 \sqrt{H_1'(t)H_2(t)  - H_1(t) H_2'(t) }}{H_1(T_1)H_2(t)-H_1(t)H_2(T_1)} \rd t \right]
\end{aligned}
\end{equation}
where the functions $H_1$ and $H_2$ are given by the covariance of the driver $(D_t)$ of $(I_t^{(1)})$. That is,  $K_D(x,y)= H_1 (\min(x,y)) H_2(\max(x,y))$ for all $(x,y) \in [T_0,T_1]^2$.
\end{defn}
As we shall see later, the supremum is always attained, and the solution $\pi_I^*$ is unique. The FAM associated with $\pi_I^*$, i.e., $M_t^*:=\E_{\pi^*_I}[X_1 \mid X_0, I_t^{(1)}]$, where $(X_0,X_1) \sim \pi_I^*$, is also referred to as the solution to the IB-MOT problem. The objective function $\K_I$ can be understood as the expected cumulative squared difference weighted error when estimating $X_1$ by the FAM $(M_t)$, where the weight function informs on the convergence rate of the FAM to its target. This weight is important for the selection of the optimal coupling since, by construction, $(M_t)$ converges to $X_1$. Hence, the errors towards the end of the interval are small, regardless of the coupling. To fairly compare the errors induced by a coupling at the beginning of the interval and those towards the end of the interval when averaging, the weight function is introduced, increasing the weight of the errors the closer they happen to time $T_1$.
\begin{rem}
IB-MOT is not simply MOT with a specific cost function. If one were to write $\K_I(\pi)$ as an integral of a function $c(x_0,x_1)$ against $\rd \pi (x_0,x_1)$,  the function would be 
\begin{dmath}
    c(x_0,x_1)=\E \left[ \int_{T_0}^{T_1} \frac{(x_1-M_t(x_0,g_0(t) x_0 + g_1(t) x_1 + A_t^{(1)}))^2 \sqrt{H_1'(t)H_2(t)  - H_1(t) H_2'(t) }}{H_1(T_1)H_2(t)-H_1(t)H_2(T_1)} \rd t \right],
\end{dmath}
where $g_0$ and $g_1$ are given by the signal function of $(I_t^{(1)})$. While $c(x_0,x_1)$ does indeed depend only on $x_0$ and $x_1$ explicitly, the expression of $(M_t)$ itself depends on $\pi$. Hence, $c(x_0,x_1)$ is not a suitable cost function for martingale optimal transport.
\end{rem}
We give another formulation of IB-MOT, by which we show that $\sup\limits_{\pi \in \mathcal M (\mu,\nu)} \K_I(\pi) < \infty$. 
\begin{prop} \label{IBMOTprop1}
It holds that
\begin{equation}
\begin{aligned}
\sup\limits_{\pi \in \mathcal M (\mu,\nu)} \K_I(\pi) & =\sup\limits_{\pi \in \mathcal M (\mu,\nu)} \E \left[ \int_{T_0}^{T_1} \frac{\var[X_1\mid X_0, I_t^{(1)}]\sqrt{H_1'(t)H_2(t)  - H_1(t) H_2'(t) }}{H_1(T_1)H_2(t)-H_1(t)H_2(T_1)} \rd t \right] \\
& <\infty .
\end{aligned}
\end{equation}
\end{prop}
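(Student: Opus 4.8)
The claim splits into a pointwise identity
$\K_I(\pi)=\E\big[\int_{T_0}^{T_1}\var[X_1\|X_0,I_t^{(1)}]\,w(t)\d t\big]$, valid for every $\pi\in\mathcal M(\mu,\nu)$, where
$w(t):=\sqrt{H_1'(t)H_2(t)-H_1(t)H_2'(t)}\big/\big(H_1(T_1)H_2(t)-H_1(t)H_2(T_1)\big)$, together with finiteness of the resulting supremum. For the identity, since $\mu,\nu\in L^2(\R)$ we have $X_1\in L^2$ and $M_t=\E_\pi[X_1\|X_0,I_t^{(1)}]\in L^2$, so $(\omega,t)\mapsto[X_1-M_t]^2\,w(t)$ is jointly measurable and nonnegative (nonnegativity of $w$ comes from the Gauss--Markov structure of the driver: $K_D(s,t)=H_1(s\wedge t)H_2(s\vee t)$ forces $H_1/H_2$ nondecreasing with $H_2>0$, hence $H_1'H_2-H_1H_2'\geqslant0$ and $H_1(T_1)H_2(t)-H_1(t)H_2(T_1)\geqslant0$ on $[T_0,T_1]$). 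Tonelli's theorem then lets me exchange $\E$ and $\int_{T_0}^{T_1}$; conditioning on the $\sigma$-field generated by $(X_0,I_t^{(1)})$, with respect to which $M_t$ is measurable, gives $\E[(X_1-M_t)^2\|X_0,I_t^{(1)}]=\var[X_1\|X_0,I_t^{(1)}]$, and exchanging back proves the identity, hence the equality of the two suprema.

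For finiteness, fix $\pi$ and apply Proposition \ref{Ito} (whose hypotheses are part of the standing assumptions on $(I_t^{(1)})$): there is a standard Brownian motion $(W_t)$ with $M_t=X_0+\int_{T_0}^t\sigma_s\d W_s$ on $(T_0,T_1)$, where $\sigma_s=\var[X_1\|X_0,I_s^{(1)}]\,w(s)\geqslant0$. The FAM $(M_t)_{t\in[T_0,T_1]}$ is a continuous martingale that is $L^2$-bounded, being a conditional expectation of $X_1\in L^2$, with $M_{T_0}=\E_\pi[X_1\|X_0]=X_0$ by the martingale-coupling property and $M_t\to X_1$ in $L^2$ as $t\uparrow T_1$ (continuity of $I^{(1)}$ with $I_{T_1}^{(1)}=X_1$, cf.\ \cite{Kassis1}). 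For each $t<T_1$, orthogonality of martingale increments and the $L^2$ isometry give $\E\big[\int_{T_0}^t\sigma_s^2\d s\big]=\E[(M_t-X_0)^2]=\E[M_t^2]-\E[X_0^2]$; letting $t\uparrow T_1$ and using monotone convergence on the left and $L^2$-convergence on the right,
\begin{equation*}
\E\!\left[\int_{T_0}^{T_1}\sigma_s^2\d s\right]=\E[X_1^2]-\E[X_0^2]=\int_\R x^2\d\nu(x)-\int_\R x^2\d\mu(x)=:C_{\mu,\nu}<\infty,
\end{equation*}
a quantity depending only on the marginals, not on $\pi$.

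Combining the two pieces finishes it. Since $w$ is deterministic, $\E[\sigma_t]=\E[\var[X_1\|X_0,I_t^{(1)}]]\,w(t)$, so the identity from the first paragraph reads $\K_I(\pi)=\E\big[\int_{T_0}^{T_1}\sigma_t\d t\big]$. By the Cauchy--Schwarz inequality in the time variable, $\int_{T_0}^{T_1}\sigma_t\d t\leqslant\sqrt{T_1-T_0}\,\big(\int_{T_0}^{T_1}\sigma_t^2\d t\big)^{1/2}$ pathwise, and Jensen's inequality (concavity of $\sqrt{\,\cdot\,}$) together with the previous display yields $\K_I(\pi)\leqslant\sqrt{(T_1-T_0)\,C_{\mu,\nu}}$, a bound independent of $\pi$; taking the supremum over $\mathcal M(\mu,\nu)$ gives the result.

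The main obstacle is the behaviour at the right endpoint $t=T_1$, where $w$ is singular (for the randomized Brownian bridge $w(t)=1/(T_1-t)$, which is not integrable): a crude estimate such as $\var[X_1\|X_0,I_t^{(1)}]\leqslant\var[X_1\|X_0]$ would make the time integral diverge, so one genuinely needs the quadratic-variation identity, which encodes the compensating rate at which $\var[X_1\|X_0,I_s^{(1)}]$ tends to $0$ as $s\uparrow T_1$. The care therefore goes into making the $L^2$ isometry and the bracket identity $\langle M\rangle_t=\int_{T_0}^t\sigma_s^2\d s$ rigorous on each $[T_0,t]$ with $t<T_1$ --- where $\sigma$ is well-behaved off the finite exceptional set of Proposition \ref{Ito} --- and then passing to the limit $t\uparrow T_1$ via monotone convergence and the $L^2$-convergence $M_t\to X_1$ supplied by the FAM construction of \cite{Kassis1}.
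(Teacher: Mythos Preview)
Your proof is correct and follows essentially the same route as the paper's: both hinge on the It\^o isometry identity $\E\big[\int_{T_0}^{T_1}\sigma_t^2\,\d t\big]=\E[X_1^2]-\E[X_0^2]$ to control the singular weight, and both reduce the pointwise identity to $\E[(X_1-M_t)^2]=\E[\var[X_1\mid X_0,I_t^{(1)}]]$. The only cosmetic differences are that you invoke Tonelli (using nonnegativity of $w$) where the paper first establishes integrability and then applies Fubini, and you bound the $L^1$ norm via pathwise Cauchy--Schwarz plus Jensen where the paper uses H\"older on the product space $\Omega\times[T_0,T_1]$.
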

\begin{proof}
Let $u_1(t)=\sqrt{H_1'(t)H_2(t)  - H_1(t) H_2'(t) }$ and $u_2(t)= H_1(T_1)H_2(t)-H_1(t)H_2(T_1)$. By the definition of the FAM and Itô's isometry, we have
\begin{align}
    \E[(X_1-X_0)^2] &= \E[(M_{T_1}-X_0)^2 ] \nn\\
    &= \E \left[ \left(\int_{T_0}^{T_1} \frac{\var[X_1\mid X_0, I_t^{(1)}]u_1(t)}{u_2(t)} \rd W_t\right)^2 \right] \nn\\
    &= \E \left[ \int_{T_0}^{T_1} \left(\frac{\var[X_1\mid X_0, I_t^{(1)}]u_1(t)}{u_2(t)}\right)^2 \rd t \right].
\end{align}
Since $\E[(X_1-X_0)^2 ]= \E[X_1^2] - 2 \E[X_0E[ X_1  \mid X_0] ] +\E[X_0^2 ] = \E[X_1^2] - \E[X_0^2 ] < \infty$, we have $\E [ \int_{T_0}^{T_1} (\var[X_1\mid X_0, I_t^{(1)}]u_1(t)/u_2(t))^2 \rd t ] <\infty$, and because the product space $\Omega \times [T_0,T_1]$ is of finite measure, by Hölder's inequality, we also have $\E [ \int_{T_0}^{T_1} |\var[X_1\mid X_0, I_t^{(1)}]u_1(t)/u_2(t)| \rd t ] <\infty$. Using the martingale property, we see that 
\begin{align}
    \E[\var[X_1\mid X_0, I_t^{(1)}]]&= \E [ \E [ X_1^2 \mid X_0, I_t^{(1)} ] - M_t^2 ] \nn\\
    &= \E [ X_1^2 ] - \E[M_t^2]\nn\\
    &= \E [ X_1^2+M_t^2] - 2 \E[M_t\,\E[X_1  \mid \F_t^I]]  \nn\\
    &= \E [ X_1^2+M_t^2] - 2 \E[\E[X_1 M_t \mid \F_t^I]]  \nn\\
    &= \E[X_1^2+M_t^2 - 2X_1 M_t] \nn\\
    &= \E [ (X_1-M_t)^2].
\end{align}
By Fubini's theorem, since $\E [ \int_{T_0}^{T_1} |\var[X_1\mid X_0, I_t^{(1)}]u_1(t)/u_2(t)| \rd t ] <\infty$, we get 
\begin{align}
    \E \left[ \int_{T_0}^{T_1} \frac{\var[X_1\mid X_0, I_t^{(1)}]u_1(t)}{u_2(t)} \rd t \right] &= \int_{T_0}^{T_1} \frac{\E \left[ \var[X_1\mid X_0, I_t^{(1)}]\right]u_1(t)}{u_2(t)} \rd t  \nn \\
    &= \int_{T_0}^{T_1} \frac{\E [ (X_1-M_t)^2]u_1(t)}{u_2(t)} \rd t.
\end{align}
Furthermore, also by Fubini's theorem, since 
\begin{equation}
    \int_{T_0}^{T_1}\E\left[  \abs{\frac{ (X_1-M_t)^2u_1(t)} {u_2(t)}}\right] \rd t =\int_{T_0}^{T_1}\E \left[  \abs{\frac{\var[X_1\mid X_0, I_t^{(1)}]u_1(t)}{u_2(t)}}\right] \rd t  <\infty,
\end{equation}
one has $\int_{T_0}^{T_1} \E [ (X_1-M_t)^2]u_1(t)/u_2(t) \rd t=\K_I(\pi)$. Hence, 
\begin{equation}
    \sup\limits_{\pi \in \mathcal M (\mu,\nu)} \K_I(\pi) = \sup\limits_{\pi \in \mathcal M (\mu,\nu)} \E \left[ \int_{T_0}^{T_1} \frac{\var[X_1\mid X_0, I_t^{(1)}]u_1(t)}{u_2(t)} \rd t \right] < \infty.
\end{equation}
\end{proof}
\begin{rem} \label{MBBrem}
Since the process $$\left(\frac{\var[X_1\mid X_0, I_t^{(1)}]\sqrt{H_1'(t)H_2(t)  - H_1(t) H_2'(t) }}{H_1(T_1)H_2(t)-H_1(t)H_2(T_1)}\right)_{t\in [T_0,T_1]}$$ is also the volatility process of $(M_t)$, see Eq. (\ref{GMFAM}), the IB-MOT problem can be seen as the martingale Benamou-Brenier problem (MBB, see \cite{Veraguas} and \cite{Kassis}) with the additional constraint that the volatility $(\sigma_t)_{t \in [T_0,T_1]}$ must be of the form
\begin{equation*}
\sigma_t=\frac{\var[X_1\mid X_0, I_t^{(1)}]\sqrt{H_1'(t)H_2(t)  - H_1(t) H_2'(t) }}{H_1(T_1)H_2(t)-H_1(t)H_2(T_1)}.
\end{equation*}
This also highlights the differences with Schrödinger's volatility models (SVMs) \cite{Labordere}, a martingale interpolating method that takes inspiration from the entropy minimising property of Schrödinger's problem and relies on a measure change: Consider two, possibly correlated, Brownian motions $(B_t)_{t \geqslant 0}$ and $(\tilde B_t)_{t \geqslant 0}$, under a measure $\P$, along with the system of SDEs on the real interval $[T_0,T_1]$
\begin{align}
&\d M_{t} = a_{t}\, M_{t}\d\widetilde B_t, \nn\\
&\d a_{t}=b(a_t)+ c\left(a_{t}\right) \text{d} B_t,
\end{align}
where the real functions $b$ and $c$ are given, and are such that $(a_t)_{t \in [T_0,T_1]}$ is an Itô process. The goal is to change the measure $\P$ to an equivalent one, such that the process $(M_t)_{0 \leqslant t \leqslant 1}$ matches the given measures $\mu$ and $\nu$ at times $T_0$ and $T_1$, respectively, while being a true martingale. If several equivalent measures exist that satisfy these constraints, one must select the one that is closest to $\P$ in terms of the Kullback-Leibler divergence. By Girsanov's theorem, changing $\P$ to an equivalent measure $\Q$ transforms the system of SDEs as follows: 
\begin{align}
&\d M_{t}= a_{t} M_{t} (\text{d}\widetilde B_t + \widetilde \lambda_t \d t), \nn\\
&\d a_{t}=b(a_t)+ c\left(a_{t}\right) (\text{d} B_t + \lambda_t \d t).
\end{align}
One must then find the drifts $(\tilde \lambda_t)$ and $(\lambda_t)$, generated by the same measure change, that minimise the functional $\E_\Q [ \int_{T_0}^{T_1} \widetilde{\lambda}^2_s + \lambda^2_s \d s]$ while satisfying the constraints $M_0 \sim \mu$ and $M_1 \sim \nu$, and so that $(M_t)_{t\in [T_0,T_1]}$ be a martingale.
\end{rem}

Eq. (\ref{IB-MOT}) can be written more concisely using the integration by parts formula:
\begin{coro} \label{IBMOTcoro1}
It holds that 
\begin{equation}
    \sup\limits_{\pi \in \mathcal M (\mu,\nu)} \K_I(\pi) = \sup\limits_{\pi \in \mathcal M (\mu,\nu)} \E[X_1 W_{T_1}].
\end{equation}
\end{coro}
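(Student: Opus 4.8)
The plan is to establish the \emph{pointwise} identity $\K_I(\pi)=\E[X_1W_{T_1}]$ for every fixed martingale coupling $\pi\in\mathcal M(\mu,\nu)$, where $(W_t)$ denotes the innovations process of the FAM $(M_t)$ built from $\pi$; taking the supremum over $\pi$ on both sides then yields the corollary at once. To prepare, I would recall from the preceding proposition and its proof three facts: first, that $\K_I(\pi)=\E\!\left[\int_{T_0}^{T_1}\sigma_t\d t\right]$, where $(\sigma_t)$ is precisely the volatility process of $(M_t)$ appearing in the representation (\ref{GMFAM}), namely $\sigma_t=\var[X_1\|X_0,I_t^{(1)}]\,u_1(t)/u_2(t)$ in the notation used there; second, that $\E\!\left[\int_{T_0}^{T_1}\sigma_t^2\d t\right]<\infty$; and third, that $M_{T_0}=X_0$ and $M_{T_1}=\E[X_1\|\F_{T_1}^I]=X_1$ almost surely, the latter because $X_1=I_{T_1}^{(1)}$ is $\F_{T_1}^I$-measurable. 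Since $\E\!\left[\int_{T_0}^{T_1}\sigma_t^2\d t\right]<\infty$, the stochastic integral $\int_{T_0}^{\cdot}\sigma_s\d W_s$ is a genuine $L^2$-bounded $(\F_t^I)$-martingale on all of $[T_0,T_1]$, so by sample-continuity of both sides the representation holds also at $t=T_1$, giving $X_1=X_0+\int_{T_0}^{T_1}\sigma_s\d W_s$ almost surely.

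The core computation is then short. Recalling that the innovations process satisfies $W_{T_0}=0$, I would write $W_{T_1}=W_{T_1}-W_{T_0}=\int_{T_0}^{T_1}\d W_s$ and substitute the representation of $X_1$, so that
\begin{equation}
    \E[X_1W_{T_1}]=\E\!\left[X_0(W_{T_1}-W_{T_0})\right]+\E\!\left[\left(\int_{T_0}^{T_1}\sigma_s\d W_s\right)\left(\int_{T_0}^{T_1}\d W_s\right)\right].
\end{equation}
The first term vanishes: $X_0\in L^2$ is $\F_{T_0}^I$-measurable and $(W_t)$ is an $(\F_t^I)$-martingale, so $\E[X_0(W_{T_1}-W_{T_0})]=\E\!\left[X_0\,\E[W_{T_1}-W_{T_0}\|\F_{T_0}^I]\right]=0$. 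For the second term, both integrands lie in $L^2(\Omega\times[T_0,T_1])$ --- for $(\sigma_t)$ by the integrability recalled above, for the constant integrand trivially --- so the polarized Itô isometry applies and the expression equals $\E\!\left[\int_{T_0}^{T_1}\sigma_s\d s\right]=\K_I(\pi)$, the cross-variation of the two stochastic integrals being $\int_{T_0}^{\cdot}\sigma_s\d s$. Hence $\E[X_1W_{T_1}]=\K_I(\pi)$ for every $\pi$, and passing to the supremum over $\mathcal M(\mu,\nu)$ completes the argument. Conceptually this is nothing but the integration-by-parts (product) rule $\d(M_tW_t)=M_t\d W_t+W_t\d M_t+\d\langle M,W\rangle_t$ with $\d\langle M,W\rangle_t=\sigma_t\d t$, integrated over $[T_0,T_1]$ and averaged.

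I do not anticipate a real obstacle; the delicate points are purely bookkeeping. One must use that the innovations process has $W_{T_0}=0$ and is $(\F_t^I)$-adapted --- this is exactly what annihilates the $X_0$-term --- and one must lean on the bound $\E\!\left[\int_{T_0}^{T_1}\sigma_t^2\d t\right]<\infty$ furnished by the previous proposition, which legitimizes both extending the representation of $(M_t)$ up to $t=T_1$ and invoking the Itô isometry directly rather than through a localization argument. If one insists instead on a literal integration by parts, writing $X_1W_{T_1}=\int_{T_0}^{T_1}M_t\d W_t+\int_{T_0}^{T_1}W_t\sigma_t\d W_t+\int_{T_0}^{T_1}\sigma_t\d t$ and then taking expectations, the term $\int_{T_0}^{\cdot}W_t\sigma_t\d W_t$ is a priori only a local martingale (one controls $\int\sigma_t^2\d t$ but not $\int W_t^2\sigma_t^2\d t$ in $L^1$ without additional moment bounds), so one would have to localize and justify the limit; the isometry route above avoids this entirely.
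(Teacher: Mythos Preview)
Your proof is correct and is essentially the paper's argument: both establish the pointwise identity $\K_I(\pi)=\E[X_1W_{T_1}]$ by exploiting the representation $M_t=X_0+\int_{T_0}^t\sigma_s\d W_s$ together with $\d\langle M,W\rangle_t=\sigma_t\d t$, then take the supremum. The paper states this as a one-line integration by parts yielding $\E[M_{T_1}W_{T_1}-M_{T_0}W_{T_0}]=\E[X_1W_{T_1}]$, while you reach the same conclusion via the polarized It\^o isometry; as you yourself observe, the isometry route has the minor advantage of sidestepping the need to check that the two stochastic integrals $\int M_t\d W_t$ and $\int W_t\sigma_t\d W_t$ arising from the product rule are true (not merely local) martingales, a point the paper leaves implicit.
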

\begin{proof}
We recall that
\begin{equation}
    M_t = X_0 + \int_{T_0}^t \frac{\var [ X_1 \mid X_0, I_s^{(1)} ] \sqrt{H_1'(s)H_2(s)  - H_1(s) H_2'(s) }}{H_1(T_1)H_2(s)-H_1(s)H_2(T_1)} \rd W_s,
\end{equation}
where $(W_t)$ is the innovations process of $(M_t)$, an $(\F^I_t)$-adapted standard Brownian motion on $[T_0,T_1]$. Integrating by parts, we obtain
\begin{dmath}
    \E \left[ \int_{T_0}^{T_1} \frac{\var [ X_1 \mid X_0, I_s^{(1)} ] \sqrt{H_1'(s)H_2(s)  - H_1(s) H_2'(s) }}{H_1(T_1)H_2(s)-H_1(s)H_2(T_1)} \rd t \right] = \E[M_{T_1}W_{T_1} - M_{T_0}W_{T_0}] = \E [X_1 W_{T_1}],
\end{dmath}
which confirms the statement of the corollary. 
\end{proof}
 Because $W_{T_1}$ is a $\mathcal N(0,T_1-T_0)$-distributed random variable, regardless of the coupling $\pi$, one can complete the square to obtain an equivalent problem to Eq. (\ref{IB-MOT}):
\begin{align}
    \E[X_1^2] - 2 \sup\limits_{\pi \in \mathcal M (\mu,\nu)} \E [X_1 W_{T_1}] + \E[W_{T_1}^2] &= \inf\limits_{\pi \in \mathcal M (\mu,\nu)} \E[X_1^2] - 2\E [X_1 W_{T_1}] + \E[W_{T_1}^2] \nn \\
    &= \inf\limits_{\pi \in \mathcal M (\mu,\nu)} \E[(X_1 - W_{T_1})^2].
\end{align}
This equivalent formulation of IB-MOT highlights the similarities and differences with MOT. We retrieve the ability to use the difference squared as a cost, and noise has been introduced inside the objective function by replacing what in MOT is usually $X_0$ with $W_{T_1}$. 
\begin{rem}
It is important to note that, even though the innovations process is a Brownian motion regardless of the coupling $\pi$ and the choice of the RAP $(I_t^{(1)})$ that satisfies the conditions in Proposition \ref{Ito}, the joint distribution of the innovations process and $X_1$ does depend on $\pi$ and $(I_t^{(1)})$. This can be observed by investigating the general almost sure expression of $(W_t)$, see \cite{KassisMacrina}, given by $W_t= \int_{T_0}^t 1/\sqrt{(H_1'(s)H_2(s) - H_1(s) H_2'(s))} \rd N_s$, where
\begin{dmath}
    \rd N_t{:=} \left( \frac{[ H_1'(t)H_2(T_1)  - H_1(T_1) H_2'(t)]Z_t - [H_1'(t)H_2(t)  - H_1(t) H_2'(t)]M_t}{H_1(T_1)H_2(t) - H_1(t) H_2(T_1) }  - J_t \right) \rd t + \rd I^{(1)}_t, \nn
\end{dmath}
$Z_t:= I_t^{(1)} -g_0(t)X_0  - \E[A^{(1)}_t]$ and $J_t:= X_0\, g_0'(t) + (\E[A^{(1)}_t])'$.
\end{rem}
\begin{rem}
Although we focus on a formulation of IB-MOT over a single period, all the statements and proofs up to this point still hold with minor changes, detailed below, for the following formulation of the multi-period IB-MOT problem. Let $\mu_0$, $\mu_1$, \ldots, $\mu_n$ be $L^2$-probability measures on $\R$ in convex order. Given a martingale coupling $\pi \in \mathcal M (\mu_0,\mu_1,\ldots, \mu_n)$, let $(I_t^{(n)})$ be a suitable n-arc $X$-RAP on $\{T_0,T_n\}_*$, see \cite{KassisMacrina} and \cite{Kassis} for further details, and $M_t= \E[ X_n \mid X_0,\ldots, X_{m(t)}, I_t^{(n)} ]$ an $n$-arc FAM for $X=(X_0,X_1, \ldots, X_n) \sim \pi$. The multi-period IB-MOT problem associated with $(I_t^{(n)})$ for the measures $(\mu_0$, $\mu_1$, \ldots, $\mu_n)$ is
\begin{equation}
\sup_{\pi \in \mathcal M (\mu_0,\mu_1,\ldots, \mu_n)} \K_I^{(n)}(\pi),
\end{equation}
where
\begin{equation}
\K_I^{(n)}(\pi)=\E \left[ \int_{T_0}^{T_n} \frac{[X_n-M_t(X_0,\ldots, X_{m(t)}, I_t^{(n)})]^2 \sqrt{H_1'(t)H_2(t)  - H_1(t) H_2'(t) }}{H_1(T_n)H_2(t)-H_1(t)H_2(T_n)} \rd t \right].
\end{equation}
It suffices to replace $X_1$ by $X_n$, $T_1$ by $T_n$, and condition on $(X_0,\ldots, X_{m(t)}, I_t^{(n)})$ instead of $(X_0, I_t^{(1)})$ in all statements and proofs to obtain their multi-period equivalent:
\begin{align}
&\sup_{\pi \in \mathcal M (\mu_0,\mu_1,\ldots, \mu_n)} \K_I^{(n)}(\pi) \nn \\ 
&= \sup_{\pi \in \mathcal M (\mu_0,\mu_1,\ldots, \mu_n)} \E \left[ \int_{T_0}^{T_n} \frac{\var[X_n\mid X_0, \ldots, X_{m(t)}, I_t^{(n)}]\sqrt{H_1'(t)H_2(t)  - H_1(t) H_2'(t) }}{H_1(T_n)H_2(t)-H_1(t)H_2(T_n)} \rd t \right] \nn \\ 
&= \sup_{\pi \in \mathcal M (\mu_0,\mu_1,\ldots, \mu_n)} \E[X_n W_{T_n}] < \infty.
\end{align}
\end{rem}
Next, we show that a solution to an IB-MOT problem always exists in $\mathcal M (\mu,\nu)$, and it is unique.
\begin{prop} \label{IB-MOTsol}
It holds that 
\begin{equation}
    \sup\limits_{\pi \in \mathcal M (\mu,\nu)} \K_I(\pi) = \max\limits_{\pi \in \mathcal M (\mu,\nu)} \K_I(\pi)
\end{equation} 
and the solution $\pi^*_I :=\argmax\limits_{\pi \in \mathcal M (\mu,\nu)} \K_I(\pi)$ is unique.
\end{prop}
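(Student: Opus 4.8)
\emph{Overview.} The plan is to recognise the IB-MOT problem as the maximisation of a concave, weakly upper semicontinuous functional over the weakly compact convex set $\mathcal M(\mu,\nu)$, and then to upgrade concavity to strict concavity by analysing the equality case. First I would record that $\mathcal M(\mu,\nu)$ is non-empty (as $\mu\leqslant_{\mathrm{cx}}\nu$), convex, and weakly sequentially compact: it is a weakly closed subset of $\Pi(\mu,\nu)$, which is tight and hence weakly sequentially compact by Prokhorov's theorem, the martingale constraint surviving weak limits because $|x|+|y|$ is uniformly integrable along $\Pi(\mu,\nu)$. Writing $u_1(t)=\sqrt{H_1'(t)H_2(t)-H_1(t)H_2'(t)}$, $u_2(t)=H_1(T_1)H_2(t)-H_1(t)H_2(T_1)$ and $\Phi_t(\pi)=\E_\pi[\var[X_1\|X_0,I_t^{(1)}]]\geqslant 0$, the proof of the previous proposition shows $\K_I(\pi)=\int_{T_0}^{T_1}\Phi_t(\pi)\,u_1(t)/u_2(t)\,\d t$ with non-negative weight. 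The crucial observation is that $I_t^{(1)}=g_0(t)X_0+g_1(t)X_1+A_t^{(1)}$ with $A_t^{(1)}$ independent of $(X_0,X_1)$ and \emph{Gaussian} (Gauss--Markov driver), which gives the variational identity
\begin{equation*}
\Phi_t(\pi)=\inf_{f\in C_b(\R^2)}\int_{\R^2}\Bigl(\int_{\R}\bigl(x_1-f(x_0,g_0(t)x_0+g_1(t)x_1+a)\bigr)^2\,\rho_t(\d a)\Bigr)\,\d\pi(x_0,x_1),
\end{equation*}
with $\rho_t$ the law of $A_t^{(1)}$. For each fixed $f\in C_b(\R^2)$ the bracketed function is continuous in $(x_0,x_1)$ with at most quadratic growth in $x_1$, so the map $\pi\mapsto\E_\pi[(X_1-f(X_0,I_t^{(1)}))^2]$ is affine and weakly continuous on $\Pi(\mu,\nu)$ (the second marginal, hence $\int x_1^2\,\d\pi$, is fixed); therefore $\Phi_t$ is concave and weakly upper semicontinuous on $\mathcal M(\mu,\nu)$ for every $t\in(T_0,T_1)$, and so is $\K_I$.

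\emph{Existence.} I would take a maximising sequence and, by compactness, a subsequence with $\pi_k\to\pi^*\in\mathcal M(\mu,\nu)$ weakly, aiming at $\limsup_k\K_I(\pi_k)\leqslant\K_I(\pi^*)$. On $[T_0,T_1-\delta]$ the weight $u_1/u_2$ is bounded and $\Phi_t\leqslant\int y^2\,\d\nu$, so the reverse Fatou lemma together with the pointwise weak upper semicontinuity of $\Phi_t$ bounds the corresponding part of $\limsup_k\K_I(\pi_k)$ by $\int_{T_0}^{T_1-\delta}\Phi_t(\pi^*)\,u_1/u_2\,\d t\leqslant\K_I(\pi^*)$. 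For the tail near $T_1$ I would use that $I_{T_1-\delta}^{(1)}=X_1+R_\delta$ with $\E_\pi[R_\delta^2]\leqslant r(\delta)$, where $r(\delta)\to0$ depends only on $\int x^2\,\d\mu$, $\int y^2\,\d\nu$ and the law of the noise---not on $\pi$---so that $\Phi_{T_1-\delta}(\pi)\leqslant\E_\pi[(X_1-I_{T_1-\delta}^{(1)})^2]\leqslant r(\delta)$ uniformly; since $\int_{T_1-\delta}^{T_1}\E_\pi[\sigma_t^2]\,\d t=\E_\pi[(X_1-M_{T_1-\delta})^2]=\Phi_{T_1-\delta}(\pi)$ by It\^o's isometry (with $\sigma_t$ the volatility in (\ref{GMFAM})) and $\Phi_t(\pi)\,u_1(t)/u_2(t)\leqslant(\E_\pi[\sigma_t^2])^{1/2}$ by Jensen, Cauchy--Schwarz in $t$ yields $\int_{T_1-\delta}^{T_1}\Phi_t(\pi)\,u_1/u_2\,\d t\leqslant\sqrt{\delta\,r(\delta)}$ for every $\pi$. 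Letting $k\to\infty$ and then $\delta\to0$ gives $\limsup_k\K_I(\pi_k)\leqslant\K_I(\pi^*)$, so the supremum is attained. I expect this tail estimate to be the main obstacle: $\K_I$ is \emph{not} weakly continuous since $u_1/u_2$ blows up at $T_1$, and it is precisely the $\pi$-independence of the marginal second moments that rescues the semicontinuity argument.

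\emph{Uniqueness.} Suppose $\pi_1\neq\pi_2$ both attain the maximum; then $\bar\pi=\tfrac12(\pi_1+\pi_2)\in\mathcal M(\mu,\nu)$ also attains it, so $\K_I(\bar\pi)=\tfrac12\K_I(\pi_1)+\tfrac12\K_I(\pi_2)$, and since each $\Phi_t$ is concave with $u_1/u_2>0$ a.e., equality $\Phi_t(\bar\pi)=\tfrac12\Phi_t(\pi_1)+\tfrac12\Phi_t(\pi_2)$ holds for a.e.\ $t$; fix such a $t$ with moreover $g_1(t)\neq0$ (possible, $g_1$ being continuous with $g_1(T_1)=1$). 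Since the functionals $\pi\mapsto L_t(f,\pi):=\E_\pi[(X_1-f(X_0,I_t^{(1)}))^2]$ are affine, equality forces the minimiser $\bar f=\E_{\bar\pi}[X_1\|X_0,I_t^{(1)}]$ for $\bar\pi$ to minimise $L_t(\cdot,\pi_i)$ as well, so by uniqueness of the $L^2$-projection $\E_{\pi_1}[X_1\|X_0=x_0,I_t^{(1)}=y]=\E_{\pi_2}[X_1\|X_0=x_0,I_t^{(1)}=y]$ for $\mu_0$-a.e.\ $x_0$ and Lebesgue-a.e.\ $y$ (the conditional law of $I_t^{(1)}$ given $X_0$ admits a strictly positive density, being a convolution with the Gaussian $\rho_t$). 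Fixing such an $x_0$ and letting $V^{(i)}:=g_1(t)X_1$ with $X_1$ of conditional law $\mu_{x_0}^{(i)}$ given $X_0=x_0$ under $\pi_i$, this reads $\E[V^{(1)}\|V^{(1)}+A_t^{(1)}=z]=\E[V^{(2)}\|V^{(2)}+A_t^{(1)}=z]$ for a.e.\ $z$; Brown's identity (Tweedie's formula) expresses each side through the logarithmic derivative of the smooth, strictly positive density of $V^{(i)}+A_t^{(1)}$, so the two densities share logarithmic derivative and total mass, hence coincide, and deconvolution against the never-vanishing Gaussian characteristic function gives $\mu_{x_0}^{(1)}=\mu_{x_0}^{(2)}$. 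As this holds for $\mu_0$-a.e.\ $x_0$, the disintegrations agree and $\pi_1=\pi_2$, a contradiction. The essential ingredient here is the Gaussianity of the arcade noise, which makes $\pi\mapsto(\E_\pi[X_1\|X_0,I_t^{(1)}])_{t}$ injective; without it, concavity alone would not yield uniqueness.
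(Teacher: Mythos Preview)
Your argument is correct and follows a genuinely different route from the paper's. The paper first passes to the equivalent formulation $\inf_{\pi}\E[(X_1-W_{T_1})^2]$ via the innovations process, then disintegrates on $X_0$ and rewrites the conditional problem as a squared Wasserstein-2 distance in quantile form, obtaining existence from the lower semicontinuity of the Wasserstein metric plus Fatou and uniqueness from strict convexity of $\gamma_x\mapsto\int_0^1(Q_{\gamma_x}-Q_{\mathcal N_x})^2\,\d\alpha$. You instead stay with the representation $\K_I(\pi)=\int\Phi_t(\pi)\,u_1/u_2\,\d t$, exhibit each $\Phi_t$ as an infimum of affine weakly continuous functionals (hence concave and weakly upper semicontinuous), and control the singular tail near $T_1$ by a $\pi$-uniform bound combining $\Phi_{T_1-\delta}(\pi)\leqslant\E[(X_1-I_{T_1-\delta}^{(1)})^2]$ with It\^o's isometry and Cauchy--Schwarz; for uniqueness you analyse the equality case in concavity, force the conditional expectations under the two candidate maximisers to coincide, and then invoke Tweedie's formula together with Gaussian deconvolution to recover the disintegrations. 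Your approach is more self-contained---it avoids the innovations-process corollary and the somewhat delicate joint law of $(X_1,W_{T_1})$---and it makes transparent that the Gaussianity of $A_t^{(1)}$ is the mechanism behind uniqueness. The paper's route is shorter once the $\E[X_1W_{T_1}]$ identity is in hand and leans on classical optimal-transport facts. One small point worth tightening in your write-up: you take the infimum defining $\Phi_t$ over $C_b$ to get weak continuity, but then speak of ``the minimiser $\bar f$''; it is cleaner to note that the $C_b$-infimum equals the $L^2$-infimum (by density), use the former for semicontinuity and the latter---where the conditional expectation is the genuine minimiser---for the equality analysis.
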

\begin{proof}
Let $\mathcal M_x (\mu,\nu) = \{\gamma_x \in \mathcal P(\R) \mid \rd\pi(x,y) = \rd \gamma_x (y) \rd \mu (x), \pi \in \mathcal M (\mu,\nu)\}$ be the set of conditional martingale measures for $\mu$ and $\nu$. First, we show that 
\begin{equation}
    \inf\limits_{\pi \in \mathcal M (\mu,\nu)} \E\left[(X_1 - W_{T_1})^2\right] = \inf\limits_{\gamma_x \in \mathcal M_x (\mu,\nu)} \int_\R \int_0^1( Q_{\gamma_x}(\alpha)-Q_{\mathcal N_x}(\alpha))^2 \rd \alpha \rd \mu(x),
\end{equation}
where $Q_{\mathcal N_x}$ is the conditional quantile function of $W_{T_1}$ given $X_0=x$, and $Q_{\gamma_x}$ is the conditional quantile function of $X_1$ given $X_0=x$. We have:
\begin{enumerate}
\item For any $\gamma_x \in \mathcal M_x (\mu,\nu)$, there exists a $\pi \in \mathcal M(\mu,\nu)$, satisfying $\rd \pi(x,y) = \rd \gamma_x(y) \rd\mu(x)$, such that $\int_\R \int_0^1( Q_{\gamma_x}(\alpha)-Q_{\mathcal N_x}(\alpha))^2 \rd \alpha \rd \mu(x) = \E_\pi[(X_1 - W_{T_1})^2]$. So,
    \begin{equation}
        \inf\limits_{\pi \in \mathcal M (\mu,\nu)} \E\left[(X_1 - W_{T_1})^2\right] \leqslant \inf\limits_{\gamma_x \in \mathcal M_x (\mu,\nu)} \int_\R \int_0^1( Q_{\gamma_x}(\alpha)-Q_{\mathcal N_x}(\alpha))^2 \rd \alpha \rd \mu(x).
    \end{equation}
\item For any $\pi \in \mathcal M(\mu,\nu)$, there exists a $\tilde \pi_x \in \Pi(\gamma_x, \mathcal N_x)$ such that $\E_\pi[(X_1 - W_{T_1})^2] = \E [\E[(X_1 - W_{T_1})^2 \mid X_0] ]= \int_\R \int_{\R^2} (x_1 - w)^2 \rd \tilde \pi_x (x_1,w) \rd \mu(x)$. So,
\begin{align}
    &\inf\limits_{\pi \in \mathcal M (\mu,\nu)} \E\left[(X_1 - W_{T_1})^2\right] \nn \\
    &\geqslant \inf\limits_{\gamma_x \in \mathcal M_x (\mu,\nu)} \int_\R \inf\limits_{\pi_x \in \Pi (\gamma_x, \, \mathcal N_x)} \int_{\R^2} (x_1 - w)^2 \rd  \pi_x (x_1,w) \rd \mu(x) \nn \\
    &=\inf\limits_{\gamma_x \in \mathcal M_x (\mu,\nu)} \int_\R \int_0^1( Q_{\gamma_x}(\alpha)-Q_{\mathcal N_x}(\alpha))^2 \rd \alpha \rd \mu(x).
\end{align}
\end{enumerate}
Hence, we have equality, and, if one of the infimums is attained, both are, while satisfying $\rd \pi(x,y) = \rd \gamma_x(y) \rd \mu(x)$. Since $\mathcal M (\mu,\nu)$ is weakly compact in $\mathcal P(\R^2)$, any sequence in $\mathcal M (\mu,\nu)$ admits a weakly converging subsequence with limit in $\mathcal M (\mu,\nu)$. Let us consider a minimising sequence, and furthermore denote a weakly converging subsequence by $(\pi_n) \subseteq \mathcal M (\mu,\nu)$. Using the relationship $\rd \pi(x,y) = \rd \gamma_x(y) \rd \mu(x)$, we construct another weakly converging sequence $(\gamma_{x,n}) \subseteq \mathcal M_x (\mu,\nu)$ with limit inside $\mathcal M_x (\mu,\nu)$. We denote its limit by $\gamma_x^*$. We have
\begin{align}
    &\inf\limits_{\gamma_x \in \mathcal M_x (\mu,\nu)} \int_\R \int_0^1( Q_{\gamma_x}(\alpha)-Q_{\mathcal N_x}(\alpha))^2 \rd \alpha \rd \mu(x) \leqslant  \int_\R \int_0^1( Q_{\gamma_x^*}(\alpha)-Q_{\mathcal N_x}(\alpha))^2 \rd \alpha \rd \mu(x) \nn \\
    & \hspace{6cm} \leqslant \int_\R \liminf_{n \rightarrow \infty} \int_0^1( Q_{\gamma_{x,n}}(\alpha)-Q_{\mathcal N_x}(\alpha))^2 \rd \alpha \rd \mu(x),
\end{align}
where we used the lower semicontinuity of the Wasserstein metric on $\mathcal P (\R)$, see Lemma 4.3 and Remark 6.12 in \cite{Villani2}. By applying Fatou's lemma to $x \mapsto \int_0^1( Q_{\gamma_{x,n}}(\alpha)-Q_{\mathcal N_x}(\alpha))^2 \rd \alpha$, we have
\begin{align}
        &\int_\R \liminf_{n \rightarrow \infty} \int_0^1( Q_{\gamma_{x,n}}(\alpha)-Q_{\mathcal N_x}(\alpha))^2 \rd \alpha \rd \mu(x) \nn \\
        &\hspace{6cm} \leqslant \lim_{n \rightarrow \infty}\int_\R  \int_0^1( Q_{\gamma_{x,n}}(\alpha)-Q_{\mathcal N_x}(\alpha))^2 \rd \alpha \rd \mu(x)\nn \\
        &\hspace{6cm} = \inf\limits_{\gamma_x \in \mathcal M_x (\mu,\nu)} \int_\R \int_0^1( Q_{\gamma_x}(\alpha)-Q_{\mathcal N_x}(\alpha))^2 \rd \alpha \rd \mu(x).
\end{align}
Hence, 
\begin{equation}
    \lim_{n \rightarrow \infty}\int_\R  \int_0^1( Q_{\gamma_{x,n}}(\alpha)-Q_{\mathcal N_x}(\alpha))^2 \rd \alpha \rd \mu(x)=\int_\R \int_0^1( Q_{\gamma_x^*}(\alpha)-Q_{\mathcal N_x}(\alpha))^2 \rd \alpha \rd \mu(x),
\end{equation}
and the infimum is attained at $\gamma_x^* \in \mathcal M_x (\mu,\nu)$. Thus, the coupling $\pi^* \in \mathcal M (\mu ,\nu)$ defined by $\rd \pi^* (x,y) = \rd \gamma_x^*(y) \rd \mu (x)$ is equal to $\argmin_{\pi \in \mathcal M (\mu,\nu)} \E[(X_1 - W_{T_1})^2]$ and $\argmax_{\pi \in \mathcal M (\mu,\nu)} \K_I(\pi)$. Uniqueness follows from the strict convexity of the map $\gamma_x \mapsto \int_0^1( Q_{\gamma_x}(\alpha)-Q_{\mathcal N_x}(\alpha))^2 \rd \alpha$.
\end{proof}
\begin{ex} \label{IBMOTex1}
Let $T=T_1-T_0, \sigma>0, X_0 \sim \mathcal N(0,\sigma^2), X_1 \sim \mathcal N(0,\sigma^2 +T )$. We show that $$\pi=\mathcal N \left( \begin{pmatrix} 0 \\ 0  \end{pmatrix}, \begin{pmatrix} \sigma^2 & \sigma^2\\ \sigma^2 & \sigma^2+T \end{pmatrix}\right),$$ which we call {\it Brownian coupling}, is the solution to the IB-MOT problem in the case
\begin{equation}
    I_{t}^{(1)}= B_t - \frac{T_1-t}{T_1-T_0} (B_{T_0}-X_0) - \frac{t-T_0}{T_1-T_0} (B_{T_1}-X_1).
\end{equation}
We recall that $\sup\limits_{\pi \in \mathcal M (\mu,\nu)} \K_I(\pi) \leqslant \E[X_1^2]-\E[X_0^2] = T$. Assuming that $$ (X_0,X_1) \sim \mathcal N \left( \begin{pmatrix} 0 \\ 0  \end{pmatrix}, \begin{pmatrix} \sigma^2 & \sigma^2\\ \sigma^2 & \sigma^2+T \end{pmatrix}\right),$$
we obtain:
\begin{enumerate}
    \item The triplet $(X_0,X_1,I_s^{(1)})$ is Gaussian for all $s\in [T_0,T_1]$, and $(I_t^{(1)})$ is a standard Brownian motion on $[T_0,T_1]$.
    \item $\cov[X_0,I_s^{(1)}]= \sigma^2$.
    \item $\cov[X_1,I_s^{(1)}]= \frac{T_1-s}{T} \sigma^2 + \frac{s-T_0}{T} (\sigma^2+T) = \sigma^2 + s-T_0$.
    \item \begin{dmath*}
        \var[I_s^{(1)}]= \frac{(T_1-s)^2 }{T^2}\sigma^2 + \frac{(s-T_0)^2}{T^2} (\sigma^2+T) + 2 \frac{(T_1-s)(s-T_0)}{T} \sigma^2 + \frac{(T_1-s)(s-T_0)}{T} 
        = (s-T_0)^2 + \sigma^2T + \frac{(T_1-s)(s-T_0)}{T}.
        \end{dmath*}
    \item \begin{align}
        M_s &= \begin{pmatrix} \sigma^2  & \sigma^2 + s-T_0  \end{pmatrix} \begin{pmatrix} \sigma^2  & \sigma^2 \\ \sigma^2 & (s-T_0)^2 + \sigma^2T + \frac{(T_1-s)(s-T_0)}{T} \end{pmatrix}^{-1}\begin{pmatrix} X_0 \\ I_s^{(1)}  \end{pmatrix}, \nn \\
        &= I_s^{(1)} \nn.
        \end{align}
    \item \begin{align}
    \K_I \left(\mathcal N \left( \begin{pmatrix} 0 \\ 0  \end{pmatrix}, \begin{pmatrix} \sigma^2 & \sigma^2\\ \sigma^2 & \sigma^2+T \end{pmatrix}\right)\right) &= \E \left[ \int_{T_0}^{T_1} \frac{(X_1 - I_s^{(1)})^2}{T_1-s} \rd s \right], \nn \\
    &= \E \left[ \int_{T_0}^{T_1} \frac{\frac{(T_1-s)^2}{T^2}(X_1-X_0)^2 + \left(A_s^{(1)}\right)^2}{T_1-s} \rd s \right], \nn \\
    &= \int_{T_0}^{T_1} \frac{(T_1-s)^2 + (T_1-s)(s-T_0)}{T(T_1-s)} \rd s, \nn \\
    &=T\nn .
    \end{align}
\end{enumerate}
Hence, the Brownian coupling is the optimal martingale coupling for $(X_0,X_1)$, and Brownian motion is the optimal FAM between $X_0$ and $X_1$, according to IB-MOT.
\end{ex}
\begin{rem}
    As discussed in Remark \ref{MBBrem}, there seems to be a connection, or at least an analogy, between FAMs \& IB-MOT and standard stretched Brownian motions \& MBB. The goal of this remark is to discuss their relationship and formulate an open question. We set $T_0=0, T_1=1$ for simplicity. Let $\mu$ and $\nu$ be two $L^2(\R)$-probability measures, in convex order. We recall the definitions of a FAM and of a standard stretched Brownian motion. 
    \begin{itemize}
        \item Standard stretched Brownian motion (ssBm): Given a standard Brownian motion $(B_t)$, a process $(S_t)_{t\in [0,1]}$ is a $(\mu,\nu)$-ssBm on $[0,1]$ if there exists a random variable $Y\ind (B_t)$ and a function $F:\R \rightarrow \R$, defined as the gradient of a convex function and satisfying $F(B_1 + Y) \sim \nu$, such that 
    \begin{equation}
        \E[ F(B_1 + Y) \mid Y] \sim \mu, \quad S_t\eql \E[ F(B_1 + Y) \mid \F_t],
    \end{equation}
    for all $t\in[0,1]$, where $(\F_t)$ is the filtration generated by $(B_t+Y)$. The existence and uniqueness of the function $F$ and the distribution of $Y$ are guaranteed when $(\mu,\nu)$ is irreducible\footnote{We say that the pair $(\mu, \nu)$ is irreducible if for all measurable sets $A, B \subseteq \R$ with $\mu(A)>0$ and $\nu(B)>0$, there exists a martingale coupling $\pi \in \mathcal M(\mu, \nu)$ such that $\pi(A \times B)>0$.} in the sense of \cite{Beiglbock}, Appendix A.1.
    \item FAM: Given an $X$-RAP $(I_t^{(1)})$, a one-arc FAM for $X\sim \pi^X \in \mathcal M(\mu,\nu)$ on $[0,1]$ is a stochastic process defined by $M_t=\E[X_1 \mid \F_t^I ]$.
    \end{itemize}
    A FAM exists regardless of the irreducibility of $(\mu,\nu)$. For a fixed driver and interpolating coefficients, there are as many FAMs in law for $(\mu,\nu)$ as there are elements in $\mathcal{M}(\mu,\nu)$.

    To compare these two types of stochastic processes, i.e., ssBMs and FAMs, let us suppose that $(\mu,\nu)$ is irreducible so that the $(\mu,\nu)$-ssBm exists and is unique in law. Since an ssBm is obviously related to Brownian motion, let us fix the driver $(D_t)$ of the RAP to be a standard Brownian motion and the interpolating coefficients to be $f_0(t)=g_0(t)=1-t$, $f_1(t)=g_1(t)=t$. An open question is: among all the FAMs for $(\mu,\nu)$ in law, is there one that coincides in law with the $(\mu,\nu)$-ssBm? This is not a trivial question: what enables the interpolation by a FAM is the filtration $(\F_t^I)$, while in the ssBm case, it is the function $F$ and the distribution of $Y$. Put it another way, we ask whether there is a coupling $\pi^X \in \mathcal M (\mu,\nu)$ under which 
    \begin{equation}
        \E_{\pi^X}\left[X_1 \mid X_0, (1-t) X_0 + t (X_1-D_1) + D_t\right]\eql \E[ F(B_1 + Y) \mid Y, Y+B_t].
    \end{equation}
    We do not have the answer to this question. However, we know that for some choices of $(\mu,\nu)$, we can indeed have both processes match in law. For instance, when $\mu = \mathcal N (0, 1)$ and $\nu = \mathcal N (0, 2)$, the ssBm is a Brownian motion restricted to $[1,2]$ shifted to $[0,1]$, which is also a FAM with the coupling $$\pi^X=\mathcal N \left( \begin{pmatrix} 0 \\ 0  \end{pmatrix}, \begin{pmatrix} 1 & 1 \\ 1 & 2 \end{pmatrix}\right),$$ see Example \ref{IBMOTex1}. This is because ssBms and FAMs---driven by Brownian motion with the fixed interpolating coefficients above--- are both trying to mimic Brownian motion in a way, and in this particular case, both of them are able to match it. When a FAM has a different driver and interpolating coefficients than the ones discussed here, there is no reason to believe that there would be a coupling that makes it match the law of the ssBm, since the FAM is no longer trying to mimic Brownian motion. 

    Now, in the case of the standard Brownian driver with interpolating coefficients $f_0(t)=g_0(t)=1-t$, $f_1(t)=g_1(t)=t$, suppose that the answer to the open question is affirmative, or at least that there exists a subset of irreducible measures $(\mu,\nu)$ large enough (larger than the Gaussian measures) for which the answer is affirmative. Could it be that the special FAM that matches in law the ssBm (which is the solution to MBB in the case of irreducible measures) is the solution of IB-MOT? In other words, when the ssBm is a FAM in law, do IB-MOT and MBB share their solution in law? This is the case for Gaussian measures, which are the only measures for which we know the explicit solution to both IB-MOT and MBB. Furthermore, IB-MOT and MBB look "alike" in their two main forms, as compared in Table \ref{IBMOT-MBB}.    
    \begin{table}[H]
    \begin{center}
    \begin{tabular}{|p{9cm}|p{5cm}|}
    \hline
    IB-MOT &  MBB \\
    \hline
    $$\sup_{\pi} \E \left[\int_{0}^{1} \sigma_{t} \rd t\right]$$ where \newline $\sigma_t=\var[X_1\mid X_0, (1-t) X_0 + t (X_1-D_1) + D_t]/(1-t)$ & $$\sup_{\sigma_t} \E \left[\int_{0}^{1} \sigma_{t} \rd t\right]$$ such that \newline $Z_t= Z_0+ \int_0^t \sigma_t \rd B_t$, $Z_0 \sim \mu$, $Z_1 \sim \nu$  \\
    \hline
    $$\sup_\pi \E [X_1 W_{1}]$$ where \newline $W_1=\int_{0}^1 (X_1  - M_u  +D_u - uD_1 )/(1-u) \rd u  $, \newline $M_u= \E_\pi\left[X_1 \mid X_0, (1-u) X_0 + u (X_1-D_1) + D_u\right]$ & $$\sup_{\sigma_t} \E[Z_1 B_1]$$ such that \newline $Z_t= Z_0+ \int_0^t \sigma_t \rd B_t$, $Z_0 \sim \mu$, $Z_1 \sim \nu$ \\
    \hline
    \end{tabular}
    \caption{Comparison between characteristics of IB-MOT and MBB.}
    \label{IBMOT-MBB}
    \end{center}
    \end{table}
    As shown in Table \ref{IBMOT-MBB}, IB-MOT is a more constrained version of MBB. But in the case where the ssBm is a FAM in law, the constraint on the volatility in IB-MOT will automatically be satisfied in MBB, and so IB-MOT and MBB will share their solution in law. Hence, answering the open question would tell us exactly when IB-MOT and MBB share their solution in law. We summarise this discussion in a few items:
    \begin{description}
        \item [Item 1] When $(\mu, \nu)$ is irreducible, is the $(\mu, \nu)$-ssBm a FAM driven by standard Brownian motion with $f_0(t)=g_0(t)=1-t$, $f_1(t)=g_1(t)=t$ in law? If so, IB-MOT (for the fixed driver and interpolating coefficients) and MBB share their solution whenever $(\mu, \nu)$ is irreducible.
        \item [Item 2] If the answer to Item 1 is negative, is there a subset of the irreducible measures, larger than the Gaussian measures, for which the $(\mu, \nu)$-ssBm is a FAM driven by standard Brownian motion with $f_0(t)=g_0(t)=1-t$, $f_1(t)=g_1(t)=t$ in law? If so, IB-MOT (for the fixed driver and interpolating coefficients) and MBB share their solution for that subset of measures.
        \item [Item 3] If the answer to Item 2 is negative, then IB-MOT (for the fixed driver and interpolating coefficients) and MBB only share their solution in law on the set of Gaussian measures.
    \end{description}
    Note that when $(\mu,\nu)$ is not irreducible, IB-MOT and MBB cannot share their solution in law since the solution of MBB in that case would be a stretched Brownian motion (sBm), i.e., an ssBm on each irreducible component of $(\mu,\nu)$, whereas the solution to IB-MOT has the same expression on all irreducible components of $(\mu,\nu)$. For more on the structure of MBB and on the decomposition of sBm into ssBms, we refer to \cite{Veraguas2} and \cite{Schachermayer}.

    A potential avenue that could shed some light on the open question is the study of IB-MOT when the target measures are lognormal in convex order. This is the only non-Gaussian case where MBB has an explicit analytical solution: the (martingale) geometric Brownian motion, see \cite{Veraguas} Remark 1.9 and \cite{Veraguas3} Proposition 7.1. Geometric Brownian motion is a special process in "BB-theories". In \cite{Veraguas3}, the authors introduced an analogous problem to MBB: the geometric MBB (GMBB), where the optimisation takes place over the set of martingales of the form $Z_t= Z_0+ \int_0^t \sigma_t Z_t \rd B_t$, $Z_0 \sim \mu$, $Z_1 \sim \nu$, instead. The solution to GMBB is a martingale interpolating between the measures $\mu$ and $\nu$ that mimics geometric Brownian motion instead of Brownian motion. The authors show that geometric Brownian motion is the only common solution of MBB and GMBB. Going back to our open question, if we could show that geometric Brownian motion is a FAM for lognormal targets, then it would be the solution to IB-MOT, since it is solution to MBB. This would be an example of non-Gaussian targets where IB-MOT and MBB share their solution in law. 

    Let $S_t=\exp(B_t - t/2)$ and $(X_0,X_1)=(S_1,S_2)$. We recall that $S_t \sim$ lognormal$(-t/2, t)$, $\E[S_t]= 1$, $\var[S_t]= \e^t -1$, and $S_t \, | \, S_u \sim$ lognormal$(\ln S_u-(t-u)/2, t-u)$ for $0 < u < t$. This means that 
    \begin{equation}
        f^{X_1 \mid X_0}(x)=\frac{1}{x \sqrt{2 \pi}} \exp\left(-\frac{1}{2}\left(\ln \left(\frac{x}{X_0}\right)+\frac{1}{2}\right)^2\right).
    \end{equation}
    Then, the FAM for $(X_0,X_1)$ driven by standard Brownian motion $(D_t)$ with $f_0(t)=g_0(t)=1-t$, $f_1(t)=g_1(t)=t$ is
    \begin{align}
        M_t&=\E\left[X_1 \mid X_0, (1-t) X_0 + t (X_1-D_1) + D_t\right] \nn \\
        &= \frac{\int_0^\infty y f^{A^{(1)}_t} (A^{(1)}_t - g_1(t) (y-X_1))  f^{X_1 \mid X_0}(y) \rd y }{\int_0^\infty  f^{A^{(1)}_t} (A^{(1)}_t - g_1(t) (y-X_1))  f^{X_1 \mid X_0}(y) \rd y }\nn \\
        &= \frac{\int_0^\infty  \exp[{-\frac{(D_t - tD_1 -t(y-X_1))^2}{2(1-t) t}-\frac{(\ln [y/X_0] + 1/2)^2}{2 }}] \rd y}  { \int_0^\infty \exp[{-\frac{(D_t - tD_1 -t(y-X_1))^2}{2(1-t) t}-\frac{(\ln [y/X_0] + 1/2)^2}{2 }} ]/y \rd y}.
    \end{align}
    While there seems to be no explicit expression for $(M_t)_{t \in [0,1]}$, we can still simulate its sample-paths. For $t=1/2$, i.e., when the noise induced by the arcade process is maximised, we draw 10,000 points from $M_{1/2}$ and fit a kernel density estimator using Epanechnikov's kernel with cross-validated bandwidth, a lognormal$(\mu,s^2)$ density, and a lognormal$(-3/4,s^2)$ density to the data for comparison. Here are the results:
    \begin{figure}[H]
    \centering
    \includegraphics[width=.8\textwidth]{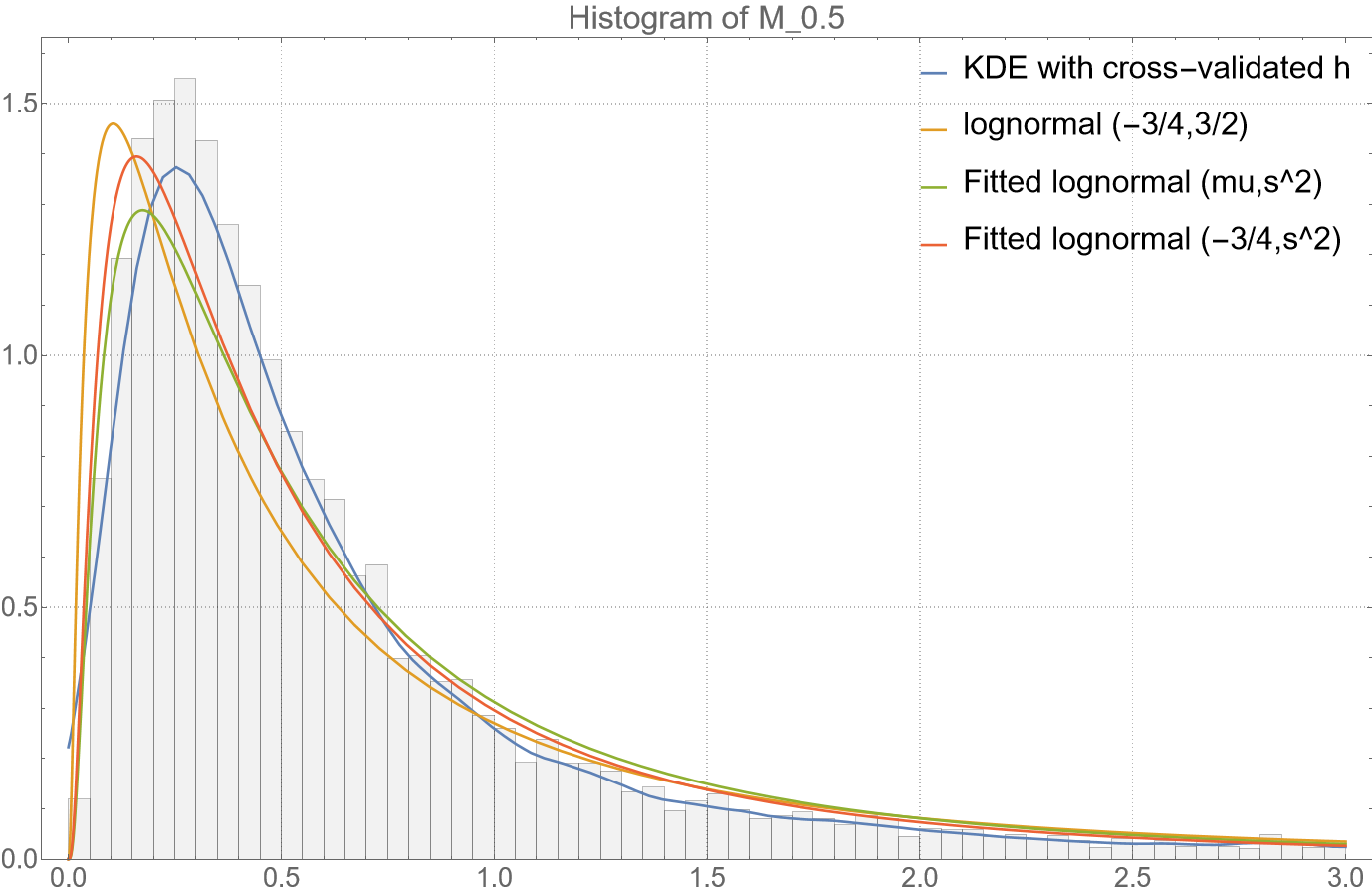}
    \caption{Histogram of 10,000 points drawn from $M_{1/2}$ with different densities fitted to these points for comparison. We focus on the interval $[0,3]$ for illustration purposes.}
    \end{figure}
    The empirical mean is about $1$ and the empirical variance is about $4.64$. The variance of $S_{3/2}$ is about $3.48$. While the data seem to have the ``overall shape'' of a lognormal density, we notice discrepancies once the lognormal densities are fitted. The reference density in yellow is the one of $S_{3/2} \sim$ lognormal$(-3/4,3/2)$. When we fit a lognormal$(\mu,s^2)$ density, we obtain $\mu \approx -0.67$ and $s^2 \approx 1.08$, and when we fit a lognormal$(-3/4,s^2)$ density, we obtain $s^2 \approx 1.08$. The $L^1$-norm between the reference lognormal density and the kernel density estimator is about $0.27$. The reference density does not seem to capture the data in a convincing way. Considering the number of data points, this suggests that $(M_t)_{t \in [0,1]}$ might not be equal in law to $(S_t)_{t \in [1,2]}$ since $S_{3/2} \sim$ lognormal$(-3/4,3/2)$. 

    Note that while Brownian motion for Gaussian targets and geometric Brownian motion for lognormal targets are the only known explicit solutions to MBB, one can efficiently approximate the solution for any irreducible measures using a Sinkhorn-like algorithm, see \cite{Joseph}. This would give us more room for experimentation to compare IB-MOT and MBB, once an efficient and general algorithm is developed for IB-MOT. A preliminary algorithm for empirical measures is introduced in the next section.
    \end{rem}
\begin{rem}
    Regardless of any connection between MBB and IB-MOT, it is clear that both are concerned with finding optimal Brownian martingales, i.e., martingales that can be written as integrals with respect to Brownian motion. This self-imposed restriction in MBB can theoretically be lifted as shown in \cite{Tschiderer}. The classical MBB seeks a martingale with given initial and terminal marginals whose transition kernel is as close to a Gaussian one as possible. Work \cite{Tschiderer} generalises this idea by replacing the Gaussian reference measure with an arbitrary probability measure $q$. The concept of $q$-Bass martingales (in our terminology, it would be $q$-ssBms) is introduced in discrete time. They are martingales whose transitions are shaped by the reference measure $q$ that is not necessarily Gaussian. Sufficient conditions are given under which such $q$-Bass martingales exist, depending on properties of the measure $q$ and the marginals. For IB-MOT, this kind of generalisation would not be possible without losing the framework introduced by FAMs. This would amount to changing in Proposition \ref{IB-MOTsol} the conditional measure $\mathcal N_x$ that appears in 
    \begin{equation}
        \inf\limits_{\gamma_x \in \mathcal M_x (\mu,\nu)} \int_\R \int_0^1( Q_{\gamma_x}(\alpha)-Q_{\mathcal N_x}(\alpha))^2 \rd \alpha \rd \mu(x),
    \end{equation}
    to a measure that is not Gaussian when integrated against the distribution of $X_0$, which would disconnect the problem from the innovations process $(W_t)$ and from the underlying FAM structure.
\end{rem}
\section{IB-MOT for empirical measures} \label{IBMOTempirical}
When considering empirical measures $\mu= 1/l \sum_{i=1}^l \delta_{x_i}$ and $\nu= 1/m \sum_{j=1}^m \delta_{y_j}$, where $(l,m)\in (\N_0 \setminus \{1\})^2$, $(x_1,\ldots,x_l)\in \R^l$, and $(y_1,\ldots,y_m)\in \R^m$, we can calculate the gradient of $\K_I$, allowing one to use gradient ascent methods to find $\pi^*_I$.
\begin{prop} \label{IBMOTprop2}
For the empirical measures $\mu$ and $\nu$, and denoting $p_{ij}:=\pi^X(\{x_i\},\{y_j\})=\P [ X_0= x_i, X_1=y_j]$ for $i=1,\ldots, l$, and $j=1,\ldots,m$, we have 
\begin{enumerate}
    \item 
    \begin{equation}
        \K_I(\pi^X)= \int_{T_0}^{T_1} \frac{S(\pi)\sqrt{H_1'(t)H_2(t)  - H_1(t) H_2'(t) }}{H_1(T_1)H_2(t)-H_1(t)H_2(T_1)} \rd t,
    \end{equation}
    where 
    \begin{equation}
        S(\pi)= \sum_{i=1}^l \sum_{j=1}^m \int_{-\infty}^\infty \left(y_j-M_t(g_0(t)x_i+g_1(t) y_j + a, x_i) \right)^2 f^{A_t^{(1)}}(a) p_{ij} \rd a,
    \end{equation}
    and $f^{A_t^{(1)}}$ is the density function of the Gaussian random variable $A_t^{(1)}$ for all $t \in (T_0,T_1)$.
    \item \begin{equation}
        \frac{\partial M_t}{\partial p_{uq}} (g_0(t) x_u + g_1(t) y_q + a, x_u) = \frac{f^{A_t^{(1)}}(a) (y_q - M_t(g_0(t) x_u + g_1(t) y_q + a, x_u))}{ \sum\limits_{j=1}^m f^{A_t^{(1)}}(a-g_1(t) (y_q-y_j)) p_{uj}},
    \end{equation}
    for $u=1,\ldots,l$ and $q=1,\ldots,m$.
    \item \begin{align}
        \frac{\partial S}{\partial p_{uh}}(\pi)&= \int_{-\infty}^\infty f^{A_t^{(1)}}(a) (y_h - M_t (g_0(t) x_u + g_1(t) y_h + a, x_u))^2 \rd a \nn \\
        &\quad -2 \sum_{q=1}^m \int_{-\infty}^\infty \frac{\left(f^{A_t^{(1)}}(a) (y_q - M_t(g_0(t) x_u + g_1(t) y_q + a, x_u))\right)^2 p_{uq}}{ \sum\limits_{j=1}^m f^{A_t^{(1)}}(a-g_1(t) (y_q-y_j)) p_{uj}} \rd a,
    \end{align}
    for $u=1,\ldots,l$ and $h=1,\ldots,m$.
\end{enumerate}
\end{prop}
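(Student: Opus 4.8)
The plan is to handle the three assertions in turn; items~2 and~3 hinge on an explicit Bayesian expression for the empirical FAM. For item~1 I would start from the representation established in the preceding proposition, namely $\K_I(\pi)=\int_{T_0}^{T_1}\E\big[(X_1-M_t(X_0,I_t^{(1)}))^2\big]\,u_1(t)/u_2(t)\,\d t$ with $u_1(t)=\sqrt{H_1'(t)H_2(t)-H_1(t)H_2'(t)}$ and $u_2(t)=H_1(T_1)H_2(t)-H_1(t)H_2(T_1)$ (the Fubini interchange is precisely the one justified there), and then compute the inner expectation for empirical marginals. Since $(X_0,X_1)$ is supported on the finite grid $\{x_i\}\times\{y_j\}$ with $\P[X_0=x_i,X_1=y_j]=p_{ij}$, while $A_t^{(1)}$ is an independent Gaussian with density $f^{A_t^{(1)}}$ and $I_t^{(1)}=g_0(t)X_0+g_1(t)X_1+A_t^{(1)}$, conditioning on $(X_0,X_1)$ turns the inner expectation into $\sum_{i,j}p_{ij}\int_{\R}(y_j-M_t(g_0(t)x_i+g_1(t)y_j+a,x_i))^2 f^{A_t^{(1)}}(a)\,\d a=S(\pi)$, which is the claim.

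For item~2, condition $X_1$ on $\{X_0=x_u\}\cap\{I_t^{(1)}=\iota\}$: given $X_0=x_u$ and $X_1=y_j$ the conditional density of $I_t^{(1)}$ at $\iota$ equals $\phi_j:=f^{A_t^{(1)}}(\iota-g_0(t)x_u-g_1(t)y_j)$, so Bayes' theorem makes $\P[X_1=y_j\mid X_0=x_u,I_t^{(1)}=\iota]$ proportional in $j$ to $p_{uj}\phi_j$, whence $M_t(\iota,x_u)=\big(\sum_j y_j p_{uj}\phi_j\big)\big/\big(\sum_k p_{uk}\phi_k\big)$. Differentiating this quotient in $p_{uq}$, only the $q$-th terms of numerator and denominator survive, and substituting $\sum_j y_j p_{uj}\phi_j=M_t(\iota,x_u)\sum_k p_{uk}\phi_k$ collapses it to $\phi_q\big(y_q-M_t(\iota,x_u)\big)\big/\sum_k p_{uk}\phi_k$; taking $\iota=g_0(t)x_u+g_1(t)y_q+a$, so that $\phi_q=f^{A_t^{(1)}}(a)$, gives the displayed formula.

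For item~3 I would differentiate $S(\pi)$ in $p_{uh}$ term by term, first checking that $\partial_{p_{uh}}$ passes under $\int_{\R}\d a$ by dominated convergence (finitely many summands, Gaussian tails, and boundedness of $M_t$ and of its $p$-derivative on the relevant range). Differentiating the explicit weight $p_{ij}$ picks out the single summand $(i,j)=(u,h)$ and produces $\int_{\R}f^{A_t^{(1)}}(a)(y_h-M_t(g_0(t)x_u+g_1(t)y_h+a,x_u))^2\d a$, the first term; differentiating the $M_t$ inside each $(y_j-M_t(\cdots))^2$ contributes only for $i=u$, because the conditional expectation in ``row $u$'' depends on $\pi$ only through $\{p_{uj}\}_j$. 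Into that implicit piece I would substitute the item~2 expression for $\partial_{p_{uh}}M_t$ and then reorganise the resulting $p_{uj}$-weighted sum, the natural tools being the centering identity $\sum_j(y_j-M_t(\iota,x_u))p_{uj}\phi_j=0$ for the conditional mean and, where convenient, the translation $a\mapsto a+g_1(t)(y_j-y_h)$. This last reorganisation is where I expect the real work to be: one must be scrupulous about \emph{which} occurrences of $p_{uh}$ are differentiated — in particular that $M_t$ carries an implicit dependence through the whole row $\{p_{uj}\}_j$, not through $p_{uh}$ alone — and then push the algebra through to the stated second term, the differentiation-under-the-integral point being only a secondary, routine technicality.
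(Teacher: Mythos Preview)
Your treatments of items~1 and~2 are correct and essentially identical to the paper's: item~1 is the Fubini interchange from the preceding proposition followed by integrating out the independent Gaussian noise $A_t^{(1)}$, and item~2 is the quotient-rule computation on the explicit Bayesian posterior mean.

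For item~3 there is a genuine mismatch between your plan and what the paper actually does. When you differentiate the $q$-th summand of $S$ (row $i=u$) with respect to $p_{uh}$, the chain-rule factor is $\partial M_t/\partial p_{uh}$ evaluated at $\iota_{uq}(a):=g_0(t)x_u+g_1(t)y_q+a$ --- exactly as you say. The paper, however, inserts the expression $\partial M_t/\partial p_{uq}$ from item~2, i.e.\ the derivative whose subscript matches the summand index $q$ rather than the differentiation variable $h$; the stated second term of item~3 is precisely the outcome of \emph{that} substitution, after absorbing the $q=h$ piece into the sum.

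If you carry out your own plan --- substitute the correct $\partial M_t/\partial p_{uh}$, pass to the common integration variable $\iota$ via the translation you mention, and invoke the centering identity $\sum_j(y_j-M_t(\iota,x_u))\,p_{uj}\phi_j(\iota)=0$ --- the implicit piece collapses to zero: after the change of variables each summand becomes $\int_\R \phi_j(\iota)\,p_{uj}(y_j-M_t(\iota,x_u))\,\partial_{p_{uh}}M_t(\iota,x_u)\,\d\iota$, and summing over $j$ pulls out the common factor $\partial_{p_{uh}}M_t(\iota,x_u)$ leaving exactly the centering sum (equivalently, this is the orthogonality $\E[(X_1-M_t)\,h(X_0,I_t^{(1)})\mid X_0]=0$). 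Your route therefore yields only the first term $\int f^{A_t^{(1)}}(a)\,(y_h-M_t(\iota_{uh}(a),x_u))^2\,\d a$, not the stated two-term expression. You cannot ``push the algebra through to the stated second term'' from the derivative you (rightly) propose; to reproduce the paper's display you would have to follow its summand-by-summand substitution of $\partial M_t/\partial p_{uq}$ --- a step worth flagging, since as written it does not correspond to differentiation in $p_{uh}$.
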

\begin{proof}
\begin{enumerate}
    \item The expression follows from 
    \begin{align}
        &\E \left[(X_1-M_t(X_0,I_t^{(1)}))^2  \right] \nn \\
        &\hspace{1.5cm}=  \int_{\R^3} (y-M_t(x, g_0(t) x + g_1(t) y + a )^2 \rd F^{(X_0,X_1,A_t^{(1)})}(x,y,a) \nn \\
        &\hspace{1.5cm}= \int_{\R^3} (y-M_t(x, g_0(t) x + g_1(t) y + a )^2 f^{A_t^{(1)}}(a) \rd \pi (x,y) \nn \\
        &\hspace{1.5cm}= \sum_{i=1}^l \sum_{j=1}^m \int_{-\infty}^\infty \left(y_j-M_t(g_0(t)x_i+g_1(t) y_j + a, x_i) \right)^2 f^{A_t^{(1)}}(a) p_{i,j} \rd a,
    \end{align}
    where we use the fact that $(A_t^{(1)})$ is independent of $(X_0,X_1)$.
    \item Recall that
    \begin{equation}
        M_t (x_u, g_0(t) x_u + g_1(t) y_q + a) = \frac{\sum\limits_{j=1}^m y_j f^{I_t^{(1)} | X_0=x_u,X_1=y_j} (g_0(t) x_u + g_1(t) y_q + a) p_{uj}  }{\sum\limits_{j=1}^m  f^{I_t^{(1)} | X_0=x_u,X_1=y_j} (g_0(t) x_u + g_1(t) y_q + a) p_{uj}},
    \end{equation}
    where $f^{I_t^{(1)} | X_0=x_u,X_1=y_j} (g_0(t) x_u + g_1(t) y_q + a) = f^{A_t^{(1)}} (a - g_1(t)(y_q-y_j))$. The derivative of the numerator with respect to $p_{uq}$ is $y_q f^{A_t^{(1)}} (a)$ and the derivative of the denominator with respect to $p_{uq}$ is $f^{A_t^{(1)}} (a)$. Hence,
    \begin{align}
        &\frac{\partial M_t}{\partial p_{uq}} (g_0(t) x_u + g_1(t) y_q + a, x_u) \nn \\
        &= \frac{y_q f^{A_t^{(1)}} (a) \sum\limits_{j=1}^m f^{A_t^{(1)}} (a - g_1(t)(y_q-y_j)) p_{uj}   }{\left(\sum\limits_{j=1}^m f^{A_t^{(1)}} (a - g_1(t)(y_q-y_j)) p_{uj} \right)^2 }  \nn \\
        &\hspace{5cm}- \frac{ f^{A_t^{(1)}} (a) \sum\limits_{j=1}^m y_j f^{A_t^{(1)}} (a - g_1(t)(y_q-y_j)) p_{uj}  }{\left(\sum\limits_{j=1}^m f^{A_t^{(1)}} (a - g_1(t)(y_q-y_j)) p_{uj} \right)^2} \nn\\
        &= \frac{y_q f^{A_t^{(1)}} (a)  }{\sum\limits_{j=1}^m f^{A_t^{(1)}} (a - g_1(t)(y_q-y_j)) p_{uj} } - \frac{ f^{A_t^{(1)}} (a)  M_t (x_u, g_0(t) x_u + g_1(t) y_q + a) }{\sum\limits_{j=1}^m f^{A_t^{(1)}} (a - g_1(t)(y_q-y_j)) p_{uj} }\nn \\
        &= \frac{f^{A_t^{(1)}}(a) (y_q - M_t(g_0(t) x_u + g_1(t) y_q + a, x_u))}{ \sum\limits_{j=1}^m f^{A_t^{(1)}}(a-g_1(t) (y_q-y_j)) p_{uj}}.
    \end{align}
    \item Differentiating, we obtain 
    \begin{dmath}
        \frac{\partial S}{\partial p_{uh}}(\pi)
         = \int_{-\infty}^\infty f^{A_t^{(1)}}(a){(y_h - M_t(g_0(t) x_u + g_1(t) y_h + a, x_u))} \\
          \left({(y_h - M_t(g_0(t) x_u + g_1(t) y_h + a, x_u))} - 2 p_{uh} \frac{\partial M_t}{\partial p_{uh}} (g_0(t) x_u + g_1(t) y_h + a, x_u)\right) \rd a 
         - 2 \sum_{q=1,q\neq h}^m \int_{-\infty}^\infty f^{A_t^{(1)}}(a)p_{uq}{(y_q - M_t(g_0(t) x_u + g_1(t) y_q + a, x_u))} 
         \frac{\partial M_t}{\partial p_{uq}} (g_0(t) x_u + g_1(t) y_q + a, x_u) \rd a.
    \end{dmath}
    We inject the expression of $\frac{\partial M_t}{\partial p_{uq}} (g_0(t) x_u + g_1(t) y_q + a, x_u)$, obtained in the second part of the proof, in the above equation and rewrite it as follows
    \begin{align}
        &\frac{\partial S}{\partial p_{uh}}(\pi)\nn\\
        & = \int_{-\infty}^\infty f^{A_t^{(1)}}(a)(y_h - M_t(g_0(t) x_u + g_1(t) y_h + a, x_u))^2\nn \\ 
        & \hspace{5.5cm} \left( 1 - \frac{2 p_{uh} f^{A_t^{(1)}}(a)}{\sum\limits_{j=1}^m f^{A_t^{(1)}}(a-g_1(t) (y_q-y_j)) p_{uj}}\right) \rd a \nn \\
        &\quad -2 \sum_{q=1,q\neq h}^m \int_{-\infty}^\infty \frac{(f^{A_t^{(1)}}(a))^2 p_{uq}(y_q - M_t(g_0(t) x_u + g_1(t) y_q + a, x_u))^2}{\sum\limits_{j=1}^m f^{A_t^{(1)}}(a-g_1(t) (y_q-y_j)) p_{uj}} \rd a.
    \end{align}
    We notice that the missing term in the sum, i.e., the term when $q=h$, appears in the first integral, which allows us to write
    \begin{align}
        &\frac{\partial S}{\partial p_{uh}}(\pi)\nn\\
        &= \int_{-\infty}^\infty f^{A_t^{(1)}}(a)(y_h - M_t(g_0(t) x_u + g_1(t) y_h + a, x_u))^2 \rd a \nn \\
        &\quad - 2 \sum_{q=1}^m \int_{-\infty}^\infty \frac{(f^{A_t^{(1)}}(a))^2 p_{uq}(y_q - M_t(g_0(t) x_u + g_1(t) y_q + a, x_u))^2}{\sum\limits_{j=1}^m f^{A_t^{(1)}}(a-g_1(t) (y_q-y_j)) p_{uj}} \rd a.
    \end{align}
\end{enumerate}
\end{proof}
As implied by the expressions of the partial derivatives, calculating analytically the argument maximising $\K_I$ is unlikely. We turn to numerical methods to approximate the solution. To get an $\epsilon$-approximation of the solution $\pi^*_I$, one can apply a constrained modification of the gradient ascent to $\K_I$. For background on gradient methods, see Chapter 14 in \cite{Guler}. Let $\epsilon>0$, 
\begin{equation}
    \delta=\max_{\pi_1, \pi_2 \in \mathcal M \left(\frac{1}{l} \sum\limits_{i=1}^l \delta_{x_i}, \frac{1}{m} \sum\limits_{j=1}^m \delta_{y_j} \right)} \sqrt{\sum_{i=1}^{l}\sum_{j=1}^{m} (\pi_1(\{x_i\},\{y_j\}) -  \pi_2(\{x_i\},\{y_j\}))^2},
\end{equation}
$\Theta=4\left \lceil{\delta^2 \norm{\nabla \K_I}_{\infty}^2 / \epsilon^2}\right \rceil $, and $\lambda= \epsilon/2\norm{\nabla \K_I}_{\infty}^2$. These variables determine the diameter of the set $\mathcal M \left(1/l\sum_{i=1}^l \delta_{x_i}, 1/m\sum_{j=1}^m \delta_{y_j} \right)$, the number of steps of the gradient ascent, and the learning rate, respectively. We start at any point $\pi^{(0)} \in \mathcal M \left(1/l\sum_{i=1}^l \delta_{x_i}, 1/m\sum_{j=1}^m \delta_{y_j} \right)$, and for $\theta=1,\ldots, \Theta$, we set 
\begin{align}
& \hat \pi ^{(\theta)} = \pi ^{(\theta-1)}+\lambda \nabla \K_I \left(\pi^{(\theta-1)}\right), \\
& \pi^{(\theta)} = \operatorname{Proj}_{\mathcal M \left(\frac{1}{l}\sum\limits_{i=1}^l \delta_{x_i}, \frac{1}{m}\sum\limits_{j=1}^m \delta_{y_j} \right)}\left(\hat \pi ^{(\theta)}\right),
\end{align}
where $\pi^{(\theta)}$ is the Euclidean projection of $\hat \pi ^{(\theta)}$ onto the set of martingale couplings. This means that every time the gradient leads to a point outside of the set $\mathcal M \left(1/l\sum_{i=1}^l \delta_{x_i}, 1/m\sum_{j=1}^m \delta_{y_j} \right)$, $\pi^{(\theta)}$ projects that point back onto that set. The $\epsilon$-approximation of the solution $\pi^*_I$ is then $1/\Theta \sum_{\theta=1}^\Theta \pi^{(\theta-1)}$ as we shall see below. In the next proposition, we use the following short-hand notation: If a coupling $\pi$ appears in a norm or a scalar product, this means that we consider the vector of $l\times m$ elements $\pi(\{x_i\},\{y_j\})$ for $i=1,\ldots, l$ and $j=1,\ldots,m$. 
\begin{prop} \label{IBMOTprop3}
    It holds that $ \K_I(\pi_I^*)-\K_I\left(\frac{1}{\Theta} \sum_{\theta=1}^\Theta \pi^{(\theta-1)} \right) \leqslant \epsilon$.
\end{prop}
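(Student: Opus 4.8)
The plan is to read this as the classical convergence guarantee for projected gradient ascent of a concave function over a compact convex set, specialised to the step size $\lambda$ and horizon $\Theta$ fixed above. Write $C$ for the set of martingale couplings of the two empirical measures, that is, the polytope of nonnegative arrays $(p_{ij})$ satisfying the marginal and martingale constraints; it is compact and convex in $\R^{lm}$, its Euclidean diameter is exactly $\delta$ by the definition of $\delta$, and the Euclidean projection $\operatorname{Proj}_C$ onto it is well defined and nonexpansive. By Proposition~\ref{IB-MOTsol} the maximiser $\pi_I^*$ lies in $C$. The two structural inputs the textbook estimate needs are: (i) $\pi\mapsto\K_I(\pi)$ is concave on $C$, and (ii) $\K_I$ is continuously differentiable on $C$ with $\sup_{\pi\in C}\norm{\nabla\K_I(\pi)}=\norm{\nabla\K_I}_\infty<\infty$.

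Step (ii) follows from the partial-derivative formulas of the preceding proposition: their denominators $\sum_{j=1}^m f^{A_t^{(1)}}(a-g_1(t)(y_q-y_j))\,p_{uj}$ are strictly positive everywhere on $C$, because each $f^{A_t^{(1)}}$ is a strictly positive Gaussian density and $\sum_j p_{uj}=\mu(\{x_u\})=1/l>0$, so $\nabla\K_I$ is continuous on the compact set $C$ and hence bounded there. For step~(i) I would use that, conditionally on $X_0=x_i$, the FAM $M_t$ --- and thus every integrand building up $\K_I$ --- depends only on the $i$-th row $(p_{ij})_j$, an affine function of $\pi$, so that $\K_I(\pi)=\tfrac1l\sum_{i=1}^l\Phi\big((p_{ij})_j\big)$ for a single functional $\Phi$, and concavity of $\K_I$ reduces to concavity of $\Phi$. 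Since $\E[\var[X_1\| X_0,I_t^{(1)}]\| X_0=x_i]=\E[X_1^2\| X_0=x_i]-\E[M_t^2\| X_0=x_i]$, with the first term linear in $(p_{ij})_j$ and, writing $\rho_j^{(t)}(w):=f^{A_t^{(1)}}(w-g_1(t)y_j)>0$ and using the ratio form of $M_t$,
\begin{equation*}
\E[M_t^2\| X_0=x_i]=l\int_{-\infty}^{\infty}\frac{\big(\sum_{j=1}^m y_j\,\rho_j^{(t)}(w)\,p_{ij}\big)^2}{\sum_{j=1}^m \rho_j^{(t)}(w)\,p_{ij}}\,\d w
\end{equation*}
convex in $(p_{ij})_j$ by the joint convexity of $(a,b)\mapsto a^2/b$ on $\R\times(0,\infty)$, and since the $t$-weight $\sqrt{H_1'(t)H_2(t)-H_1(t)H_2'(t)}\,/\,(H_1(T_1)H_2(t)-H_1(t)H_2(T_1))$ is nonnegative, $\Phi$, and hence $\K_I$, is concave on $C$.

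Given (i)--(ii), the remainder is the standard argument (see Chapter~14 in \cite{Guler}). Nonexpansiveness of $\operatorname{Proj}_C$ gives, for each $\theta$,
\begin{equation*}
\norm{\pi^{(\theta)}-\pi_I^*}^2\le\norm{\hat\pi^{(\theta)}-\pi_I^*}^2=\norm{\pi^{(\theta-1)}-\pi_I^*}^2+2\lambda\langle\nabla\K_I(\pi^{(\theta-1)}),\pi^{(\theta-1)}-\pi_I^*\rangle+\lambda^2\norm{\nabla\K_I(\pi^{(\theta-1)})}^2,
\end{equation*}
while concavity yields $\langle\nabla\K_I(\pi^{(\theta-1)}),\pi_I^*-\pi^{(\theta-1)}\rangle\ge\K_I(\pi_I^*)-\K_I(\pi^{(\theta-1)})$. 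Rearranging, summing over $\theta=1,\dots,\Theta$ so the distance terms telescope, and dividing by $\Theta$ yields
\begin{equation*}
\frac1\Theta\sum_{\theta=1}^\Theta\big(\K_I(\pi_I^*)-\K_I(\pi^{(\theta-1)})\big)\le\frac{\norm{\pi^{(0)}-\pi_I^*}^2}{2\lambda\Theta}+\frac{\lambda\norm{\nabla\K_I}_\infty^2}{2}\le\frac{\delta^2}{2\lambda\Theta}+\frac{\lambda\norm{\nabla\K_I}_\infty^2}{2}.
\end{equation*}
Jensen's inequality for the concave $\K_I$ bounds the left-hand side below by $\K_I(\pi_I^*)-\K_I\big(\tfrac1\Theta\sum_{\theta=1}^\Theta\pi^{(\theta-1)}\big)$. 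Finally, substituting $\lambda=\epsilon/(2\norm{\nabla\K_I}_\infty^2)$ makes the second term equal to $\epsilon/4$, and $\Theta\ge4\delta^2\norm{\nabla\K_I}_\infty^2/\epsilon^2$ makes the first term at most $\delta^2\norm{\nabla\K_I}_\infty^2/(\epsilon\Theta)\le\epsilon/4$, so the gap is at most $\epsilon/2\le\epsilon$, as asserted.

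The one genuinely non-routine step is (i): because $M_t$ enters $\K_I$ through a conditional expectation, i.e.\ a ratio of linear functions of the $p_{ij}$, concavity is not visible directly at the level of $\K_I(\pi)$ and requires the decomposition over the atoms of $\mu$ together with the perspective-function convexity of $(a,b)\mapsto a^2/b$; everything past that is bookkeeping. A minor point to check is that $\K_I$ stays differentiable up to $\partial C$ --- which it does, since the denominators above never vanish on $C$ --- so that the Euclidean gradient driving the iteration is the legitimate object throughout.
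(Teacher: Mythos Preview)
Your proof is correct and follows essentially the same route as the paper: the nonexpansiveness of the projection, expansion of $\norm{\pi_I^*-\hat\pi^{(\theta)}}^2$, telescoping over $\theta$, the concavity/gradient inequality, Jensen, and the final substitution of $\lambda$ and $\Theta$ are identical in structure and arithmetic (your $\epsilon/4+\epsilon/4\le\epsilon$ is a sharper version of the paper's final $\le\epsilon$). The paper's proof simply \emph{uses} concavity and boundedness of $\nabla\K_I$ without comment; your explicit verification of these via the row-wise decomposition and the perspective-function convexity of $(a,b)\mapsto a^2/b$ is a genuine addition that the paper omits.
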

\begin{proof}
First, we notice that for $\theta=1,\ldots,\Theta$,
\begin{align}
    \norm{\pi_I^*-\pi^{(\theta)}}^2 &\leqslant  \norm{\pi_I^*-\hat \pi^{(\theta)}}^2 \nn \\
    &= \norm{\pi_I^*-\pi^{(\theta-1)}-\lambda \nabla \K_I\left(\pi^{(\theta-1)}\right)}^2 \nn \\
    &= \norm{\pi_I^*-\pi^{(\theta-1)}}^2 + \lambda^2\norm{ \nabla \K_I\left(\pi^{(\theta-1)}\right)}^2 \nn \\
    &\hspace{5cm}-2 \lambda\left\langle\nabla \K_I\left(\pi^{(\theta-1)}\right), \pi_I^*-\pi^{(\theta-1)}\right\rangle \nn \\
    & \leqslant \norm{\pi_I^*-\pi^{(\theta-1)}}^2 + \lambda^2\norm{\nabla \K_I}_{\infty}^2 -2 \lambda\left\langle\nabla \K_I\left(\pi^{(\theta-1)}\right), \pi_I^*-\pi^{(\theta-1)}\right\rangle.
\end{align}
Hence,
\begin{align}
    \sum_{\theta=1}^\Theta \left\langle\nabla \K_I\left(\pi^{(\theta-1)}\right), \pi_I^*-\pi^{(\theta-1)}\right\rangle &\leqslant \frac{\lambda \Theta}{2} \norm{\nabla \K_I}_{\infty}^2 + \frac{1}{2\lambda} \sum_{\theta=1}^\Theta \norm{\pi_I^*-\pi^{(\theta-1)}}^2 \nn \\
    & \hspace{6cm} -\norm{\pi_I^*-\pi^{(\theta)}}^2\nn \\
    &= \frac{\Theta \epsilon}{4} +  \frac{1}{2\lambda} \norm{\pi_I^*-\pi^{(0)}}^2- \frac{1}{2\lambda}\norm{\pi_I^*-\pi^{(\Theta)}}^2 \nn \\
    & \leqslant \frac{\Theta \epsilon}{4} +  \frac{1}{2\lambda} \norm{\pi_I^*-\pi^{(0)}}^2 \nn \\
    & \leqslant \frac{\Theta \epsilon}{4} +  \frac{\delta^2}{2\lambda}.
\end{align}
Since $\K_I(\pi_I^*)-\K_I(\pi^{(\theta-1)})\leqslant \left\langle\nabla \K_I\left(\pi^{(\theta-1)}\right), \pi_I^*-\pi^{(\theta-1)}\right\rangle$, we have that
\begin{align}
    \K_I(\pi_I^*)-\K_I\left(\frac{1}{\Theta} \sum_{\theta=1}^\Theta \pi^{(\theta-1)} \right) &\leqslant \K_I(\pi_I^*)-\frac{1}{\Theta} \sum_{\theta=1}^\Theta \K_I(\pi^{(\theta-1)}) \nn \\
    &= \frac{1}{\Theta}\left(\sum_{\theta=1}^\Theta \K_I(\pi_I^*)- \K_I(\pi^{(\theta-1)}) \right) \nn \\
    &\leqslant \frac{ \epsilon}{4} +  \frac{\delta^2}{2\lambda \Theta} \nn \\
    &\leqslant\epsilon.
\end{align}
\end{proof}
\begin{rem}
    The choice of the $\epsilon$-approximation $1/\Theta \sum_{\theta=1}^\Theta \pi^{(\theta-1)}$ is arbitrary. There are other candidates that satisfy the same property. For instance, we could have chosen the maximising projection, i.e., $\argmax_{\theta=1,\ldots, \Theta} \K_I(\pi^{(\theta-1)})$, instead. However, the average projection presents a practical advantage, since applying the algorithm depends upon the computation of many integrals at each step. These integrals require numerical methods that will result in approximation errors. Hence, by averaging the projections, the approximation errors are averaged, which avoids that a single projection may perform very well due to the error it contains. 
\end{rem}
\begin{rem}
    A good choice for $\pi^{(0)}$ is the projection of the independent coupling onto $\mathcal M \left(1/l\sum_{i=1}^l \delta_{x_i}, 1/m\sum_{j=1}^m \delta_{y_j} \right)$. This is because, regardless of the empirical measures considerations that we are treating here, if we were to remove the martingale condition in IB-MOT, the solution would always be the independent coupling, i.e., $\argmax_{\pi \in \Pi(\mu,\nu)} \K_I(\pi)$ is the independent coupling for $\mu$ and $\nu$.
\end{rem}
\begin{rem}
    Calculating an $\epsilon$-approximation of the solution $\pi^*_I$ is usually a hard problem to solve. At each iteration, we have to evaluate the gradient, which involves numerical integration, then we must compute the projection, which is a quadratic problem with linear constraints. An efficient way of computing this projection is to use the interior point methods (IPMs); see \cite{Wright}. These methods transform the constrained problem into a series of unconstrained problems using barrier functions. They iteratively move through the interior of the feasible region toward the optimal point. The total time complexity of a primal-dual path-following IPM is approximately
    \begin{equation*}
        O\left((lm)^{3.5} \log \left(\frac{1}{\tilde \epsilon}\right)\right),
    \end{equation*}
    where $\tilde \epsilon$ is the desired accuracy of the projection. This has to be repeated at worst $\Theta$ times.
\end{rem}
\begin{rem} \label{convexrem}
    What happens if $\mu$ and $\nu$ are not empirical measures? For instance, continuous measures? Could one sample enough points from $\mu$ and $\nu$, apply the algorithm, and obtain an empirical version of the optimal coupling for $\mu$ and $\nu$? Unfortunately, it is not that straightforward. The fact that $\mu$ and $\nu$ are in convex order does not imply that their empirical counterparts $\widehat \mu =1/l \sum_{i=1}^l \delta_{x_i}$ and $\widehat \nu=1/m\sum_{j=1}^m \delta_{y_i}$ will be in convex order. In fact, the probability of them being in convex order is $0$. This is why one needs to "convexify" the order of $\widehat \mu$ and $\widehat \nu$ before applying IB-MOT, while making sure that the convexification procedure does not change the empirical measures too much. One way of achieving this goal is to look for measures $\widetilde \mu= 1/ \widetilde l \sum_{i=1}^{\widetilde l} \delta_{\widetilde x_i}$ and $\widetilde \nu= 1/ \widetilde m \sum_{j=1}^{\widetilde m} \delta_{\widetilde y_j}$ that are in convex order, and close to $\widehat \mu$ and $\widehat \nu$ in Wasserstein metric, respectively:
    \begin{equation}
        \inf_{(\widetilde \mu, \widetilde \nu)\,  \in\,  \mathcal C} \frac{1}{2} \left(\alpha W_1(\widehat \mu, \widetilde \mu) + \beta W_1(\widehat \nu, \widetilde \nu) \right),
    \end{equation}
    where $\alpha$ and $\beta$ are positive real numbers such that $1/\alpha + 1/\beta =1$, and $\mathcal C = \{(\mu,\nu)\in \mathcal P^1(\R)\times \mathcal P^1(\R) \mid  \exists (l,m) \in \N^2_0, (x_1,\ldots, x_l) \in \R^l, (y_1,\ldots, y_m) \in \R^m \text{ such that }  \mu = 1/l\sum_{i=1}^l \delta_{x_i}, \nu=1/m\sum_{j=1}^m \delta_{y_i}, \mu\leqslant_{\mathrm{cx}} \nu\}$. The constants $\alpha$ and $\beta$ inform us of the importance of keeping $\widetilde \mu$ close to $\widehat \mu$ and $\widetilde \nu$ close to $\widehat \nu$, respectively. Let $Q^{\widehat \mu}$, $Q^{\widetilde \mu}$, $Q^{\widehat \nu}$, $Q^{\widetilde \nu}$ be the quantile functions of $\widehat \mu, \widetilde \mu, \widehat \nu$, and $\widetilde \nu$, respectively. We recall that 
    \begin{equation}
        \alpha W_1(\widehat \mu, \widetilde \mu) + \beta W_1(\widehat \nu, \widetilde \nu) = \alpha \int_0^1 \abs{Q^{\widehat \mu}(x) - Q^{\widetilde \mu}(x)} \rd x  + \beta \int_0^1 \abs{Q^{\widehat \nu}(x) - Q^{\widetilde \nu}(x)} \rd x,
    \end{equation}
    and that $\widehat \mu\leqslant_{\mathrm{cx}} \widehat \nu$ if and only if $Q(t):= \int_0^t Q^{\widehat \mu}(x) - Q^{\widehat \nu}(x) \rd x \geqslant 0$ and $Q(1)=0$. Since we do not have $\widehat \mu\leqslant_{\mathrm{cx}} \widehat \nu$, we are looking for a continuous, piecewise affine function $F:[0,1]\rightarrow \R$ such that 
    \begin{enumerate}
        \item $Q(t)   \geqslant F(t)$, $F(0)=0$, and $F(1)= Q(1)=\E[\widehat \mu]-\E[\widehat \nu]$,
        \item there exists a real function $f$ such that $F(t)=\int_0^t f(x)  \rd x$. This means that $Q(t) - F(t) = \int_0^t Q^{\widehat \mu}(x) -  f(x)/\alpha- (Q^{\widehat \nu}(x) +  f(x)/\beta ) \rd x \geqslant 0$,
        \item the functions $Q^{\widehat \mu}(x) -   f(x)/\alpha$ and $Q^{\widehat \nu}(x) +  f(x)/\beta$ are quantile functions, i.e., càglàd non-decreasing functions on $[0,1]$. These functions are the candidates for $Q^{\widetilde \mu}$ and $Q^{\widetilde \nu}$, respectively. Equivalently, $f$ is càglàd on $[0,1]$, and the functions $t \rightarrow \int_0^t Q^{\widehat \mu}(x)\rd x -  F(t)/\alpha $ and $t \rightarrow \int_0^t Q^{\widehat \nu}(x)\rd x +  F(t)/\beta $ are convex.
    \end{enumerate}
    The set of functions that satisfy all these constraints is not empty since $Q$ satisfies them all. Hence, the original problem $\inf_{(\widetilde \mu, \widetilde \nu)\,  \in\,  \mathcal C} 1/2 \left(\alpha W_1(\widehat \mu, \widetilde \mu) + \beta W_1(\widehat \nu, \widetilde \nu) \right)$ becomes 
    \begin{equation}
        \inf_{f} \frac{1}{2} \left(\alpha \int_0^1 \abs{f(x)/\alpha } \rd x  + \beta \int_0^1 \abs{ f(x)/\beta} \rd x \right) = \inf_{f} \int_0^1 \abs{f(x)}\rd x= \inf_{f} \norm{f}_{L^1}
    \end{equation}
    under the constraints on $f$ listed above. We can choose $f$ to be the left-derivative of $F$. This will ensure that $f$ is càglàd on $[0,1]$. Let $\Delta f (x) := \lim_{y \rightarrow x^+} f(y) - f(x)$. Then, the third condition holds if and only if $\Delta Q^{\widehat \mu}(x) \geqslant \Delta f (x) / \alpha$ and $\Delta Q^{\widehat \nu}(x) \geqslant - \Delta f (x) / \beta$ for all $x\in (0,1]$. To minimise $\norm{f}_{L^1}$ under the listed constraints, we notice that the following properties must hold:
    \begin{enumerate}
        \item $\min F = \min Q$ and $\{t\in[0,1] \mid F(t)= \min F\}=\{t\in[0,1] \mid Q(t)= \min Q\}$. Otherwise, there exists a function between $F$ and $Q$ that yields a smaller $\norm{f}_{L^1}$. Hence, when combining this fact with the first condition, $F$ must pass through the points $(0,Q(0))$, $\{(x,Q(x)) \mid x \in \{ \argmin Q\} \}$, and $(1,Q(1))$ while being under $Q$.
        \item $F$ must be convex on $[0,1]$. Otherwise, the convex envelope of $F$ (the largest convex function under $F$) yields a smaller $\norm{f}_{L^1}$. This implies that $\Delta f(x) \geqslant 0$ for all $x\in (0,1]$, and $\Delta f (x) = 0$ whenever $\Delta Q^{\widehat \mu}(x)=0$, and that the second part of the third condition is automatically satisfied.
    \end{enumerate}
    Combining all these facts, we see that $F$ should be equal to the convex envelope of $Q$. If $F$ was a convex function passing through the points $(0,Q(0))$, $\{(x,Q(x)) \mid x \in \{ \argmin Q\} \}$, $(1,Q(1))$ while being under the convex envelope of $Q$, then $\Delta Q^{\widehat \mu}(x) \geqslant \Delta f (x) / \alpha$ for all $x\in (0,1]$ cannot hold. Whereas if it was a function passing through the points $(0,Q(0))$, $\{(x,Q(x)) \mid x \in \{ \argmin Q\} \}$, $(1,Q(1))$ while strictly being between $Q$ and its convex envelope, then $F$ cannot be convex. Hence, $\widehat \mu$ and $\widehat \nu$ are the empirical measures defined by the quantile functions $Q^{\widehat \mu}(x) -   f(x)/\alpha$ and $Q^{\widehat \nu}(x) +  f(x)/\beta$ respectively, where $f$ is the left-derivative of the convex envelope of $Q$.
    
    In the case $\alpha=1$, $\beta=0$, this problem of "convexifying" the order of one-dimensional empirical measures by minimising the first Wasserstein metric is closely related to the problem studied in \cite{Alphonsi}. In this paper, the authors consider the problem
    \begin{equation} \label{alphonsip}
        \inf_{\widetilde \mu} W_p(\mu, \widetilde \mu),
    \end{equation}
    where $p\in \N_0$, $\mu$ and $\nu$ are continuous measures on $\R^n$ instead of $\R$ and are not in convex order, and the optimisation takes place in the set of measures which are in convex order with $\nu$. The only conceptual difference between this problem and the problem we described is that we have continuous measures $\mu$ and $\nu$ in convex order and we are "fixing" a sampled $\widehat \mu$ from $\mu$ that is not in convex order with a sampled $\widehat \nu$ from $\nu$, whereas in \cite{Alphonsi}, $\mu$ and $\nu$ are not in convex order and they want to find the closest measure to $\mu$ that is in convex order with $\nu$. Setting $\alpha=1$, $\beta=0$ in our solution and $n=1, p=1$ in the solution described in \cite{Alphonsi} makes both solutions coincide up to this conceptual difference, meaning that their solution is also based on the convex envelope of the integrated difference of the quantile functions (of $\mu$ and $\nu$ instead of $\widehat \mu$ and $\widehat \nu$). A key insight is that the solution is actually independent from the choice of $p\in \N_0$ when $n=1$. The heuristic described here that leads to our solution, which relies on geometric interpretation, only works in 1 dimension, the setting of interest in this thesis. The problem $(\ref{alphonsip})$ is solved for $p=2$ and $n>1$ in \cite{Alphonsi} and the solution can be described as follows. Let $\phi: \mathbb{R}^d \rightarrow \mathbb{R}$ be a convex function such that $\nabla \phi \#  \mu= \nu$, i.e., $\phi$ is the Brenier potential transporting $ \mu$ to $ \nu$, and define $\psi$ as the convex envelope of $\phi$. The solution is then the measure $\nabla \psi \#  \mu$. The choice of the second Wasserstein metric is not arbitrary, it is deeply tied to the geometry of optimal transport and the structure of convex order projections:
    \begin{itemize}
        \item In $\R^n$, the optimal transport map between absolutely continuous measures under the $W_2$ metric is the gradient of a convex function. This is not true for $W_p$ with $p \neq 2$, where optimal maps may not be gradients of convex functions.
        \item The projection onto the convex order set in higher dimensions relies on convexification of the Brenier potential, which is specific to the quadratic cost. For $p \neq 2$, there is no analogous potential theory that yields such a clean projection structure.
        \item The space $\mathcal{P}_2\left(\mathbb{R}^d\right)$ endowed with $W_2$ has a Riemannian-like geometry that allows for orthogonal projections onto convex sets. This geometric structure is essential for computing the projection.
    \end{itemize}
    For $p \neq 2$, the projection problem is still well-posed (i.e., one can define the closest measure in convex order for $W_p$), but:
    \begin{itemize}
        \item The projection is not characterised by a convex envelope of a potential.
        \item There is no known explicit or semi-explicit solution.
        \item The problem becomes much harder both analytically and computationally.
    \end{itemize}
\end{rem}
\section{Conclusion}
Filtered arcade martingales (FAMs) interpolate between given random variables, which form a discrete-time martingale, by estimating the value of the final random variable in the chosen sequence based on the information that is unveiled over time by a randomised arcade process. Inspired by how noise is injected in optimal transport, we use FAMs to inject noise in the martingale counterpart to optimal transport. This results in the information-based martingale optimal transport (IB-MOT) problem, which seeks to find the supremum of the expectation of the integrated, weighted and squared difference between the final target random variable and the considered FAM over the set of martingale couplings. 

While at first glance, the problem appears substantially different from familiar optimal transport problems, we show that the IB-MOT problem can be rewritten using the innovations process of the underlying FAM. This new formulation transforms the objective function of the IB-MOT problem into a more recognisable form, while at the same time revealing the stochastic filtering aspect underpinning the FAM in terms of which the IB-MOT problem is posed.

We prove that the supremum in the IB-MOT objective function is indeed a maximum, and that the martingale coupling that achieves this maximum is unique. This result is used to show that the optimal FAM, which is the FAM constructed using the solution of the associated IB-MOT problem, between the marginals of Brownian motion, is Brownian motion, when the selected RAP is a randomised anticipative Brownian bridge. Furthermore, when considering real empirical measures, the IB-MOT objective function admits simplifications which enable one to calculate explicitly its gradient, thus opening the door to gradient ascent methods for estimating the optimal solution. We propose an algorithm that averages the Euclidean projections (on the set of martingale couplings of the considered empirical measures) of the different couplings obtained by following the direction of the gradient, starting with the independent coupling.

Sampling points from continuous distributions and applying the algorithm for empirical measures to obtain an empirical version of the optimal coupling is a challenge; the probability of the sampled measures being in convex order is zero, even if the original continuous distributions are in convex order. Hence, we discuss a method, itself based on optimal transport, to convexify the order of the sampled measures while modifying the samples minimally.

\section*{Acknowledgments}
A. Macrina is grateful for the support by the Fields Institute for Research in Mathematical Sciences through the awards of a Fields Research Fellowship in 2022 and an Elliott-Yui Distinguished Visitorship in 2025. The contents of this manuscript are solely the responsibility of the authors and do not necessarily represent the official views of the Fields Institute. G. Kassis acknowledges the UCL Department of Mathematics for a Teaching Assistantship Award. Furthermore, the authors thank B. Acciaio and G. Pammer for pointers to optimal transport and stretched Brownian motion, S. Cohen, J. Guyon, F. Krach, J. Obłój, G. W. Peters, and T.-K. L. Wong for useful conversations and suggestions. Attendees of the 7th International Conference Mathematics in Finance---MiF Kruger Park---(July 2023, South Africa), the Talks in Financial and Insurance Mathematics at ETH Zurich (Nov. 2023, Switzerland), the Department of Mathematics Seminar of Ritsumeikan University (Nov. 2023, Japan), and participants in the FAMiLLY Workshop at the University of Liverpool (Dec. 2023, U.K.), the XXV Workshop on Quantitative Finance (Apr. 2024, Italy), the Research Seminar of the Department of Mathematics \& Statistics, University of Ottawa (Nov. 2024, Canada), the Quantitative Methods in Finance (QMF) 2024 International Conference at the University of Technology Sydney (Dec. 2024, Australia), and in the 69th Annual Meeting of the Australian Mathematical Society (Dec. 2025, Australia) are thanked for their comments and suggestions.  


\begin{thebibliography}{99}
\bibitem{Alphonsi} Alfonsi, A., Corbetta, J., and Jourdain, B. (2020) \textit{Sampling of probability measures in the convex order by Wasserstein projection}. Annales de l’Institut Henri Poincaré, Probabilités et Statistiques, 56(3), pp.1706–1729.

\bibitem{Veraguas} Backhoff-Veraguas, J., Beiglböck, M., Huesmann, M., and Källblad, S. (2020) \textit{Martingale Benamou-Brenier: A probabilistic perspective}. The Annals of Probability, 48(5), pp.2258-2289.

\bibitem{Veraguas2} Backhoff-Veraguas, J., Beiglböck, M., Schachermayer, W. and Tschiderer, B., (2024). \textit{Existence of Bass martingales and the martingale Benamou-Brenier problem in $\mathbb{R}^d$}. arXiv preprint arXiv:2306.11019v3.

\bibitem{Veraguas3} Backhoff-Veraguas, J., Loeper, G. and Obłój, J., (2025). \textit{Geometric martingale Benamou-Brenier transport and geometric Bass martingales}. arXiv preprint arXiv:2406.04016.

\bibitem{Beiglbock} Beiglböck, M. and Juillet, N. (2016) \textit{On a problem of optimal transport under marginal martingale constraints}. The Annals of Probability, 44(1), pp.42-106.

\bibitem{Beiglbock2} Beiglböck, M., Nutz, M., and Touzi, N. (2017) \textit{Complete duality for martingale optimal transport on the line}. The Annals of Probability, 45(5), pp.3038-3074.

\bibitem{Bogachev} Bogachev, V.I., Kolesnikov, A.V., and Medvedev, K.V. (2012) \textit{The Monge-Kantorovich problem: achievements, connections, and perspectives}. Russian Mathematical Surveys, 67(5), pp.785-890.

\bibitem{Brenier} Brenier, Y. (1987) \textit{Décomposition polaire et réarrangement monotone des champs de vecteurs}. Comptes rendus de l'Académie des Sciences, Paris, Série I, 305, pp.805-808.


\bibitem{Guler} Güler, O. (2010) \textit{Foundation of optimisation}. New York: Springer. ISBN: 978-0-387-34431-7.

\bibitem{Labordere} Henry-Labordère, P. (2019) \textit{From (Martingale) Schrödinger Bridges to a New Class of Stochastic Volatility Model}. arXiv. Available at: \url{https://arxiv.org/abs/1904.04554}.


\bibitem{Joseph} Joseph B., Loeper G., Obłój J. (2024) \textit{The Measure Preserving Martingale Sinkhorn Algorithm.} DOI: \url{https://doi.org/10.48550/arXiv.2310.13797}.

\bibitem{Kantorovich} Kantorovich, L.V. (1942) \textit{On translation of mass}. Proceedings of the USSR Academy of Sciences, 37, pp. 199–201.

\bibitem{Kassis} Kassis, G. (2025) \textit{Strong stochastic interpolation and informed martingale transport}. PhD thesis, University College London. Available at: \url{https://discovery.ucl.ac.uk/id/eprint/10211848/}

\bibitem{KassisMacrina} Kassis, G. and Macrina, A. (2024) \textit{Arcade Processes for Informed Martingale Interpolation}. arXiv. Available at: \url{https://doi.org/10.48550/arXiv.2301.05936}.

\bibitem{Kellerer} Kellerer, H.G. (1972) \textit{Markov-Komposition und eine Anwendung auf Martingale}. Mathematische Annalen, 198, pp. 99–122.

\bibitem{Leonard1} Léonard, C. (2012) \textit{From the Schrödinger problem to the Monge-Kantorovich problem}. Functional Analysis, 262, pp. 1879–1920.

\bibitem{Monge} Monge, G. (1781) \textit{Mémoire sur la théorie des déblais et des remblais}. De l'Imprimerie Royale.

\bibitem{Peyre1} Peyré, G. and Cuturi, M. (2019) \textit{Computational optimal transport}. Foundations and Trends in Machine Learning, 11(5-6), pp. 355–607.

\bibitem{Schachermayer} Schachermayer, W. and Tschiderer, B., (2024). \textit{The decomposition of stretched Brownian motion into Bass martingales}. arXiv preprint arXiv:2406.10656.

\bibitem{Schrodinger} E. Schrödinger, \textit{Sur la théorie relativiste de l'électron et l'interprétation de la mécanique quantique.} Ann. Inst. H. Poincaré, 2:269–310, 1932.

\bibitem{Strassen} Strassen, V. (1965) \textit{The existence of probability measures with given marginals}. Annals of Mathematical Statistics, 36, pp. 423–439.

\bibitem{Tschiderer} Tschiderer, B., (2024). \textit{q-Bass martingales}. arXiv preprint arXiv:2402.05669.

\bibitem{Villani1} Villani, C. (2003) \textit{Topics in Optimal Transportation}. Providence, RI: American Mathematical Society. ISBN: 978-1-4704-1804-5.

\bibitem{Villani2} Villani, C. (2009) \textit{Optimal Transport: Old and New}. Berlin: Springer. ISBN: 978-3-540-71050-9.

\bibitem{Wright} Wright, S.J. (1997) \textit{Primal-dual interior-point methods}. Philadelphia: SIAM.
\end{thebibliography}
\end{document}